\def\tred{\textcolor{red}}
\def\tred{{}}
\def\Asp{\boldsymbol{A}}
\def\AA{\boldsymbol{A}}
\newcommand{\AspN}{{(\Asp, \, \|\ebbes\|_\Asp)}}     
\def\AspN{{\NSPB \Asp}}  
\newcommand{\ebbes}{\mbox{$\,\cdot\,$}}     
\newcommand\AspP{{\Asp'}}     
\newcommand{\Axb}{ { \AA \ast \xx = \bb }} 
\newcommand{\xx}{ {\bf x}}     
\newcommand{\bb}{ {\bf b}}     
\newcommand{\Bsp}{{\boldsymbol B}}     
\newcommand{\BspN}{(\Bsp, \, \|\ebbes\|_\Bsp)}     
\newcommand{\CORd}{{\COsp(\Rst^d)}}     
\newcommand{\COsp}{{\Csp_{\negthinspace 0}}}     
\newcommand{\Rst}{{\mathbb R}}     
\newcommand{\CORdN}{{\big( \COsp(\Rst^d), \, \|\ebbes\|_\infty \big)}}     
\newcommand{\Csp}{{\boldsymbol C}}     
\newcommand{\CbRd}{{\Cbsp(\Rdst)}}     
\newcommand{\Cbsp}{{\Csp_{\negthinspace b}}}     
\newcommand{\Rdst}{{{\Rst^d}}}     
\newcommand{\CbRdN}{{\big( \Cbsp(\Rst^d), \, \|\ebbes\|_\infty \big)}}     
\newcommand{\CcRd}{{\Ccsp(\Rst^d)}}     
\newcommand{\Ccsp}{{\Csp_{\negthinspace c}}}     
\newcommand{\Cst}{{\mathbb C}}     
\newcommand{\DPRd}{{\Dcsp'(\Rst^d)}}     
\newcommand{\Dcsp}{{\boldsymbol{\mathcal D}}}     
\newcommand{\DRd}{{\Dcsp(\Rst^d)}}     
\newcommand\FLi{{\mathcal F}{\negthinspace \Lisp}}     
\newcommand{\Lisp}{{\Lsp^1}}     
\newcommand{\FLiRd}{{ \FLi(\Rdst) }}     
\newcommand\FLiRdN{\big( \FLiRd, \, \|\ebbes\|_{\FLisp} \big)}     
\newcommand{\FLisp}{{ {\mathcal F} \negthinspace \Lisp}}     
\newcommand{\FT}{{\operatorname{{\mathcal F}}}}     
\newcommand\Hilb{\mathcal H}     
\newcommand\Hs{{{\cal H}_s}}     
\newcommand\HsRd{{{\mathcal H}_s (\Rdst)}}     
\newcommand\HsRdN{{({\cal H}_s (\Rdst), \, \|\ebbes\|_{\Hs} \big)}}     
\newcommand{\Hsp}{{\boldsymbol H}}     
\newcommand{\IFT}{\operatorname{\mathcal F}^{-1}}     
\newcommand{\LiG}{{\Lisp(G)}}     
\newcommand{\LiGN}{\big( \LiG, \, \|\ebbes\|_1 \big)}     
\newcommand{\LiR}{{\Lisp(\Rst)}}     
\newcommand{\LiRN}{\big( \LiR, \, \|\ebbes\|_1 \big)}     
\newcommand{\LiRd}{{\Lisp \nth (\Rst^d)}}     
\newcommand\nth{\negthinspace}     
\newcommand{\LiRdN}{\big( \LiRd, \, \|\ebbes\|_1 \big)}     
\newcommand{\LiRtd}{{\Lisp(\Rst^{2d})}}     
\newcommand\LinfRd{ \Lsp^{\infty}(\Rst^{d})}     
\newcommand{\Lsp}{{\boldsymbol L}}     
\newcommand{\LpRd}{{\Lpsp(\Rst^d)}}     
\newcommand{\Lpsp}{{\Lsp^p}}     
\newcommand{\LpRdN}{\big( \LpRd, \, \|\ebbes\|_p \big)}     
\newcommand{\LtR}{{\Ltsp(\Rst)}}     
\newcommand{\Ltsp}{{\Lsp^2}}     
\newcommand{\LtRN}{\big( \LtR, \, \|\ebbes\|_2 \big)}     
\newcommand{\LtRd}{{\Ltsp(\Rst^d)}}     
\newcommand{\LtRdN}{\big( \LtRd, \, \|\ebbes\|_2 \big)}     
\newcommand{\LtT}{{\Ltsp(\Tst)}}     
\newcommand{\Tst}{\mathbb{T}}     
\newcommand{\MbRd}{{\Mbsp(\Rst^d)}}     
\newcommand{\Mbsp}{{\Msp_{\negthinspace b}}}     
\newcommand{\Msp}{{\boldsymbol M}}     
\newcommand\MiiRd{\Msp^{1,1}_{0}(\Rd)}     
\newcommand\Rd{\Rdst}     
\newcommand{\MininRd}{{\Msp^{\infty,\infty}(\Rdst)}}     
\newcommand{\MpqRd}{{ \Msp^{p,q}(\Rdst)}}     
\newcommand\MpqRdN{\big( \MpqRd, \, \|\ebbes\|_{\Mpqsp} \big)}     
\newcommand{\Mpqsp}{{ \Msp^{p,q}}}     
\newcommand{\Nst}{{\mathbb N}}     
\newcommand\PV{{P_{\Vsp}}}     
\newcommand{\Vsp}{{\boldsymbol V}}     
\newcommand\QRC{{\Qst \subset \Rst \subset \Cst}}     
\newcommand{\Qst}{{\mathbb Q}}     
\newcommand{\Rdsth}{{\widehat{\Rst}^d}}     
\newcommand\SHZd{{\Shah_\Zdst}}     
\newcommand{\Shah}{{\makebox[2.3ex][s]{$\sqcup$\hspace{-0.15em}\hfill $\sqcup$}\, \, }}     
\newcommand{\Zdst}{{\Zst^d}}     
\newcommand{\SO}{\SOsp}     
\newcommand{\SOsp}{{\Ssp_{\negthinspace 0}}}     
\newcommand{\SOGTr}{{ (\SOsp,\Ltsp \negthinspace ,\SOPsp) }}     
\newcommand{\SOPsp}{{\Ssp_{\negthinspace 0}'}}     
\newcommand{\SOGTrRd}{{ (\SOsp,\Ltsp,\SOPsp)(\Rdst) }}     
\newcommand{\SOPRd}{{\SOPsp(\Rst^d)}}     
\newcommand{\SOPRdN}{(\SOPRd , \| \ebbes \|_{\SOPsp} ) }     
\newcommand{\SOPRtd}{{\SOPsp(\Rst^{2d})}}     
\newcommand{\SOPnorm}[1]{{\lVert #1 \rVert_\SOPsp}}     
\newcommand{\Ssp}{{\boldsymbol S}}     
\newcommand{\SORd}{{\SOsp(\Rst^d)}}     
\newcommand{\SORdN}{\big( \SORd, \|\ebbes\|_\SOsp \big)}     
\newcommand{\SOnorm}[1]{{\lVert #1 \rVert_\SOsp}}     
\newcommand{\SOprime}{\textnormal{\textbf{S}}'_{0}}     
\newcommand{\ScPRd}{{\ScPsp(\Rst^d)}}     
\newcommand{\ScPsp}{{\Scsp'}}     
\newcommand{\Scsp}{{\boldsymbol{\mathcal S}}}     
\newcommand{\ScRd}{{\Scsp(\Rst^d)}}     
\newcommand{\TFd}{{{ \Rdst \times \Rdsth }}}     
\newcommand{\WCOliRd}{\WTsp \COsp \lisp (\Rdst) }     
\newcommand{\lisp}{{\lsp^1}}     
\newcommand\WFLiliRd{{\Wsp(\FLi \nth, \lisp)(\Rdst)}}     
\newcommand{\Wsp}{{\boldsymbol W}}     
\newcommand\WLplqRd{{ \Wsp(\Lpsp,\lqsp)(\Rdst) }}     
\newcommand\lqsp{{ \lsp^q}}     
\newcommand{\Zst}{{\mathbb Z}}     
\newcommand{\cG}{\mathscr{G}}     
\newcommand\chck{^\checkmark \negthinspace}     
\newcommand{\ee}{ {\bf e}}     
\newcommand\epso{{ \varepsilon > 0 }}     
\newcommand\espo{{ \varepsilon > 0 }}     
\newcommand\fSO{{f \in \SOsp}}     
\newcommand{\fSORd}{ f \in \SORd }     
\newcommand\fchk{{f^\checkmark}}     
\newcommand{\fhat}{{\widehat{f}}}     
\newcommand\fper{{f_{per}}}     
\newcommand{\gd}{{\widetilde{g}}} 
\newcommand\hatf{{\widehat{f}}}     
\newcommand{\hatg}{{\widehat{g}}}     
\newcommand\hatsi{{\widehat{\sigma}}}     
\newcommand\hkr{\hookrightarrow}     
\newcommand{\intR}{\int_{\Rst}}     
\newcommand{\intRd}{\int_{\Rst^d}}     
\newcommand{\intRtd}{\int_{\Rst^{2d}}}     
\newcommand{\intinf}{\int_{-\infty}^{\, \infty}}     
\newcommand{\kiZd}{{{k \in \Zdst}}}
\newcommand\kin{{_{k=1}^n}}     
\newcommand\limninf{{\lim_{n \to \infty}}}     
\newcommand{\linfnorm}[1]{{\lVert #1 \rVert_{\infty}}}     
\newcommand{\linorm}[1]{{\lVert #1 \rVert_1}}     
\newcommand{\linsp}{{\lsp^\infty}}     
\newcommand{\lsp}{{\boldsymbol\ell}}     
\newcommand{\ltnorm}[1]{{\lVert #1 \rVert_2}}     
\newcommand\ntn{{n \times n}}     
\newcommand{\ofp}[1]{{\slp{#1}\srp}}     
\newcommand{\slp}{{{\raise 0.5pt \hbox{\footnotesize $($}}}}     
\newcommand{\srp}{{{\raise 0.5pt \hbox{\footnotesize $)$}}}}     
\newcommand\pilam{{\pi\ofp{\lambda}}}     
\newcommand\pilamg{ \pi(\lambda) g}     
\newcommand\pilamgd{ \pi(\lambda) \tilde g}     
\newcommand\siSOP{{ \sigma \in \SOPsp }}     
\newcommand\siSOPRd{ \sigma \in \SOPRd }     
\newcommand\sigSOP{{ \sigma \in \SOPsp }}     
\newcommand\signo{ \sigma_0 = \wst-\mbox{lim}_n \sigma_n }     
\newcommand{\sonorm}[1]{{\lVert #1 \rVert_\SOsp}}     
\newcommand{\sumkZd}{\sum_{\kiZd}}     
\newcommand\sumkin{\sum_{k=1}^n}     
\newcommand\sumnZd{\sum_{n\in\Zdst}}     
\newcommand{\supp}{\operatorname{supp}}     
\newcommand\suth{{ \, | \, } }     
\newcommand\veps{{\varepsilon}}     
\newcommand{\wdash}{{ w^* \negthinspace \mbox{-}}}     
\newcommand{\ws}{{w_s}}     
\newcommand\wst{ w^{*} }     
\newcommand\wstd{{w^* \negthinspace -}}     
\newcommand\wstlim{{ \wdash \lim \, }}     
\newcommand{\wwst}{{\wdash \wdash}}     
\newcommand{\yy}{ {\bf y}}     
\def\cG{{G}} 
\def\ws{{weak*}}
\def\HsRd{{{\Hsp}_s (\Rdst)}}
\def\PV{{ \operatorname{P}_\Vsp}}                     
\def\signo{{\sigma_0 = \wst-\mbox{lim}_n \sigma_n}}
\newcommand\siginf{{(\sigma_n)_{n \geq 1}}}
\def\Shah{{\rule{1pt}{1.5ex}\rule{0.5em}{1pt}\rule{1pt}{1.5ex}\rule{0.5em}{1pt}\rule{1pt}{1.5ex}\,}}
\def\SHZd{{\Shah_{\Zdst}}}
\def\SOnorm#1{{\lVert #1 \rVert_\SOsp}}
\def\sonorm#1{{\lVert #1 \rVert_\SOsp}}
\def\SOPnorm#1{{\lVert #1 \rVert_\SOPsp}}
\def\Bnorm#1{{ \| #1 \|_\Bsp }}
\def\WTsp#1#2{{\Wsp(#1,#2)}}
\def\NSPB#1{{\left (#1,\| \ebbes \|_{#1} \right )}}
\def\citeX{\cite}
\def\tred{}
\def\tblue{}
\begin{document}
 \title*{Modelling Signals as Mild Distributions}
\author{Hans G.~Feichtinger}
\institute{Faculty of Mathematics, University Vienna, AUSTRIA * \newline
and ARI (Acoustic Research Institute), OEAW    \newline \email{hans.feichtinger@univie.ac.at} }
%
%
\maketitle 

\newcommand\sigseq{{(\sigma_n)_{n \geq 1}}}

\def\sumjZd{{\sum_{j \in \Zdst}}}



\abstract{\newline This note gives a summary of ideas concerning Applied
Fourier Analysis, mostly formulated for those who have to give such courses
to engineers or mathematicians interested in real life applications.
It tries to answer recurrent questions arising regularly
after my talks on the subject. It outlines alternative ways
of presenting the core material of Fourier Analysis in a way which is
 supposed to  help students
to grasp the relevance of this transform in the context of applications.
Essentially we consider functions in $\SORd$ as possible measurements,
and the elements of the dual space (which can be also described by
various completion procedures) is thus the collection of all ``things''
(in the spirit of signals) which can be measured in a reasonable way.
We call them {\it mild distributions}. In other words, we want to base
{\it signal analysis} on the mathematical theory of mild distributions.
They do not need to be defined
in a pointwise sense. Dirac's Delta or Dirac combs are natural examples.
The are ``as real as point masses in physics''. Various equivalent
approaches (including a sequential approach) help to verify the
relevant results.
\newline
The material is based on the experiences gained by the author
in the last 30 years by teaching the subject in an abstract or
application oriented manner, at different universities, and
to  audiences with quite different background.
The text is also
based on the insight coming from computational work, partially
through numerical simulations in the area of irregular sampling
or Gabor Analysis, but also from applied projects dealing with
real world problems. In addition some ideas have been developed
during the supervision of internships
of (mostly foreign) students  in  my active time at the Faculty of
Mathematics of the University of Vienna (in the framework of the NuHAG
Group, the Numerical Harmonic Analysis Groups, see {\tt www.nuhag.eu}).} 
%
This note is certainly in the spirit of the late Abdul Siddiqi who
always tried to bring together mathematicians and engineers.


\newpage

\section{Introduction}

Basically this note is a continuation of the previous paper \cite{fe20-2},
which was a contribution to the same conference series.  While the
earlier note described the key {\it ingredients}  required for
a modern introduction to Applied Fourier Analysis and provided
some reading list, we will touch on some more practical issues,
indicating the motivation behind this enterprise. We are sure
that this is {\it in the spirit of the late Abdul Siddiqi}. Mathematicians
have to try to adapt their methods (if possible) in such a way that
engineers can make use of them without being lost in too abstract
mathematical arguments. The goal of bridging the gap between abstract mathematical concepts and real-world applications aligns well with the interdisciplinary nature of mathematics and its connections to physics.

There are many obstacles to direct communication between the two
groups. Sometimes it is just the use of a different language.
Typically   a term arising on one side is identified
with a different term appearing in the other community. Such  a problem
is relatively easy to overcome, once the bridge has been built through
conversations between members of the two groups, or by studying the publications of the other side. Then a ``vocabulary book'' can be
built and after a while newcomers do not even notice that part of the
jointly established terminology is coming from one or the other side.

For example, in wavelet theory one can clearly trace the term
{\it filter bank} to the engineering side, the idea of {\it multi-resolution
analysis} (MRA) established by S.~Mallat (\cite{ma89}, or his book \cite{ma09-1})  and Y.~Meyer
goes back to ideas in image processing, while the concept of {\it frames} is clearly a mathematical construction, which arose in the context of non-harmonic
Fourier Analysis.

There are however more serious obstacles, which have more to do with the
level of (mathematical) rigor of derivations. Mathematicians care more
- by education - for precise descriptions of terms and logical derivations
of the statements they publish, while engineers are most of the time
satisfied with heuristic derivations (seen as such by mathematicians).
It is a bold, but perhaps helpful picture to say that (pure) mathematicians
are somehow blind concerning applications, they do not ``see'' the
``physical background'' and thus they have to get precise instructions
where to move in the world. They need rules and have to distinguish between
heuristic ways of searching for a path to a new goal and the subsequent
realization. It is often like extreme climbing, where the person has to
secure his/her way step by step to be save.

In contrast, a physicist or engineers often believes to ``know'' things, because he or she has seen so many instances in
their experiment, they have taken measurements which ``clearly indicate''
how things go. They just want to understand why it works as it works,
in order to better control the situation, or improve some device, or
build some machine (like a motor, a mobile communication system, an
MRT device), in terms of efficiency or quality of output. Very often
this is actually enough to build new machines, but the more such
a device (or algorithm) is used the higher the risk is to apply it in
a situation where it does not produce the expected benefit. In the worst
case it produces an accident or unforseen output. Thus the applied
scientists may trust their intuition too much and miss dangerous
constellations.

Looking into the literature one finds that many mathematicians are
satisfied with the common view-point that a certain area (like Fourier Analysis, splines, or wavelet theory) has shown to be useful in many applications. They take this as a justification to study  questions
which appear to be somehow connected to the original problem in great detail,
loosing sometimes more and more the contact to the original problem area,
or even to developments in the applied field.
Of course it is important to understand, let me say {\it Gabor Analysis}
by getting acquainted with the corresponding function spaces
(which turn out to be the {\it modulation spaces}). As an example, the
description of {\it pseudo-differential operators} using Time-Frequency
methods has resulted in some stronger and improved versions
of classical results (Calderon-Vaillancourt paper by Heil/Groechenig \cite{grhe99}).
Meanwhile there are books covering this field (\cite{coro20},\cite{beok20}).
But as time goes on and theories get more specialized the connection
to the original setting is completely lost, and mathematical theory
might not be useful for applied scientists anymore. Even worse, it might
be potentially useful, but they cannot grasp the relevance, for
whatever reasons.  This is why communication and adequate presentation
is so important, and this author hopes to contribute a little bit
 by this article.

There is another perspective that one can take concerning this situation.
Let us view the applied scientists as those who want to realize
certain tasks, while mathematicians are providing the tools which
allow them to be successful. In this picture we have the situations
where inventive engineers have devised methods to solve a pending
problem, and then mathematicians came to refine the method. They
may justify what has been done in the applications, but more importantly
they may {\it describe the exact assumptions} that have to be made
in order to turn heuristic considerations into valid principles.
In this way they may {\it set up warning flags}  for an uncontrolled
use of such methods in areas where they cannot be validated.
While engineers are - at least in a first round - satisfied with
the observation that e.g.\ an algorithm works fine in all the
test cases, mathematicians search for arguments that ensure that
{\it ``under certain conditions the algorithm will converge with
an a priori predictable rate''} and then deliver the correct result.

The introduction of the {\it Dirac measure} (which plays an important
role in both physics and engineering) is one such example, where
the applied side was going ahead, and mathematics was following.
In the last 70 years the theory of {\it tempered distributions}
as developed by Laurent Schwartz (see \cite{sc57})
has become a backbone of mathematical
analysis. Later on L.~H\"ormander has laid the foundations for the
treatment of partial differential operators (PDEs) via this theory,
which is a cornerstone of modern mathematics (\cite{ho76}). Nowadays the
{\it sifting property}  of the Dirac Delta distribution is an
standard tool in early engineering courses on system theory , and
{\it Dirac combs} are most naturally used in order to describe the
Shannon Sampling Theorem (cf. \cite{fe24-1}).

It is obvious that in the situation described above  engineers may fail to
test potentially critical cases. Some of the disasters based on technical
faults are happening because a system may have reached a status which
was not considered during the testing phase. If this happens
to an airplane (Niki Lauda Flight 004
\begin{verbatim} https://en.wikipedia.org/wiki/Lauda_Air_Flight_004
\end{verbatim} is an example) the result
is of course causing big humanitarian loss.

On the other side of the scale mathematicians may develop more
and more refined method for problems which {\it look like applied
problems} but which unfortunately do not have any concrete impact
on the corresponding application area. The idea of a continuous
Fourier transform, defined by means of the Lebesgue integral,
on the space of (quadratically) integrable functions on the
torus or on the Euclidean space $\Rdst$ are of course beautiful
mathematical objects, and corresponding strong results can be
proved. On the other hand, the practical treatment of signals
allows only to take finitely many sampling values, which constitute
- in the spaces $\Lisp$ or $\Ltsp$ just a set of measure zero,
so are viewed as (mathematically completely) irrelevant.

Certainly the proof of the almost everywhere convergence
of Fourier series arising from square integrable functions
$f \in \LtT$ by L.~Carleson is a deep and famous result,
but does not have an impact for engineering applications,
and even very few mathematicians have studied the complicated
proof. In contrast, the Shannon Sampling Theorem or the
description of time-invariant linear systems via {\it impulse
response} or via the corresponding {\it transfer function}
are important and valuable ingredients for a good engineering
education.

Based on such considerations and views which are the result
of long-term engagement in the study of engineering applications
and problems and their mathematical treatment the author of this
notes wants to propose a new view on the concept of {\it signals}
via the theory of {\it mild distributions}.

As the term suggests mild distributions are a subclass of the
collection of {\it tempered distributions} and thus of the theory
of distributions, but unlike the setting used there the space
of test functions (now {\it Feichtinger's algebra}, the Segal
algebra $\SORdN$) is not a space of infinitely differentiable
functions, but just a (well-defined) space of continuous and
(Riemann) integrable functions, which is well adapted to the
Fourier transform. In fact, the usual (integral version of the)
Fourier transform, given by
$$  \hatf(s) = \intRd  f(t) e^{- 2 \pi i s t} dt $$
leaves $\SORd$ invariant, and consequently
the Fourier inversion, usually described as
$$  f(t) =  \intRd  \hatf(s) e^{2 \pi i t s} ds $$
is valid in a pointwise sense.

\section{What are Signals}

Asking ChatGPT provides the following answer (which is of course based on
widely used formulations in the literature, and is thus representative
for typical formulations used): \newline 
In the context of mathematics and physics, a signal generally refers to a function that conveys information. Signals can be represented mathematically and are often used in various branches of science and engineering, including physics and telecommunications. In physics, signals can represent various physical quantities, such as voltage, temperature, or sound waves, and they play a crucial role in the analysis and understanding of systems. In the field of mathematics, signal processing involves the study of mathematical techniques for analyzing and manipulating signals.

In engineering, the term ``signal'' typically refers to a time-varying physical quantity that conveys information. Signals can take various forms, such as electrical voltages, currents, or electromagnetic waves, and they are crucial in communication systems, control systems, and signal processing. Here are a few key points related to signals in engineering:  Signals can be represented mathematically as functions of time, space, or another independent variable. For example, an electrical voltage signal could be expressed as a function $V(t)$, where t is time.
 There are various types of signals:
        Analog signals or continuous signals varying smoothly over time, while
        digital  or  discrete signals take on distinct values at specific points in time or space.
  Engineers use signal processing techniques to analyze, manipulate, and extract information from signals. This includes filtering, modulation, demodulation, and various transformations.
  In telecommunications, signals are used to transmit information over long distances. Modulation is often employed to encode information onto carrier signals for efficient transmission.

\section{How Mathematicians see Signals}

In essence, mathematicians see signals as a mathematical object, and the study of signals involves applying mathematical tools and concepts to understand their properties, transformations, and behaviors. The mathematical perspective provides a rigorous framework for the analysis and synthesis of signals in various applications. As usual, mathematicians do not care about the
question of ``what signals are'', but rather develop a framework which
allows to derive certain results (theorems, based on proofs, equations,
estimates, and so on, which are valid under well-defined assumptions).
It has to be left to those who make use of such mathematical results
to decide, whether it is helpful for them or not. Sometimes it takes a while,
until the mathematical model is well enough understood in order to be actually
used. In some other cases a more comprehensive or a more simple model replaces
an older one, because it allows to obtain better results or find more elegant
ways of explaining well established statements. The theory of mild distributions is such a theory, as it can be seen as a reduction from the
wide area of tempered distributions, but on the other hand it provides
a unified approach to many questions of Fourier and Time-Frequency Analysis.
Above all, there are meanwhile simplified ways to establish the foundations
of this area (see e.g.\ \cite{feja20}, \cite{fe20-1}).


The Dirac Delta function, often denoted as $\delta(t)$, is a mathematical distribution rather than a conventional function. It is named after the physicist Paul Dirac, who introduced it as a tool in quantum mechanics.

It is a widespread habit to describe the Dirac Delta as follows:
The Dirac Delta is defined in such a way that it is zero for all values of its argument except at a single point, and the integral of the function over its entire domain is equal to 1. Mathematically, the Dirac Delta is often represented as:
$ \delta(0) = \infty$  and zero elsewhere,
with the property that \(\int_{-\infty}^{\infty} \delta(t) \, dt = 1\).
{\bf The Dirac Delta function is not a function in the traditional sense because it does not have a well-defined value at  $t = 0$, and it cannot be defined at a single point in the usual way}. Instead, it is defined through its properties, particularly its behavior under integration.
Despite not being a function, the Dirac Delta is a valuable mathematical concept, frequently used in physics and engineering to model idealized impulses or concentrations of mass, charge, or other quantities. It plays a crucial role in signal processing, Fourier analysis, and various branches of physics.

In other words, with some ``Hocus-Pocus'' mathematicians (or engineers,
accepting such manipulations) are able to make use of objects which a kind
of spooky, contradict usual mathematical rules, and are in direct
contradiction to the very well established theory of Lebesgue
integration, where one is allowed to have functions taking the
value $+\infty$, but still has the integral zero, because a single
point is a set of measure zero. So even the ``best available''
integration theory cannot be used in order to explain the properties
of Dirac Delta, and after all, why is the sum of two Dirac Deltas
(or a scalar multiple, say $7 \delta(t)$) then not the same as
$\delta(t)$, by the rule that $7 \cdot \infty = \infty$!?


The discrepancy in the view on what {\it scientific progress} is in the
different fields has led - especially in old and traditional fields such
as Fourier Analysis - to the situation, that the two communities are
only keeping very loose contacts and the majority of their
representatives is not aware of the questions discussed by the
 other side.

Therefor  it is the purpose
of this manuscript to bring the mathematical world closer
to the applied scientist (engineers, physicists, image
processing people, optical or acoustical scientists), who
need to understand the basic principles of Fourier Analysis.
We make a strong attempt to provide a lot of motivation
for the proposed approach via ``mild distributions''.

This manuscript is also a brick stone for a comprehensive
theory called ``Conceptual Harmonic Analysis'' ({\bf CHA})
see \cite{fe16}, an approach
to Harmonic Analysis which includes in some sense the
classical variants (unifying the different settings,
of continuous and discrete, periodic and non-periodic signals)
in the form of Abstract Harmonic Analysis ({\bf AHA}),
but also numerical issues and all kinds of questions arising
from time-frequency analysis. We will provide a number
of arguments, why - at least from this point of view -
the proposed setting is very {\it natural}.


\section{What are signals: Mild Distributions}

Even if it may be hard to precisely define what a signal is
on a theoretical respectively abstract basis it is much easier
to describe what are good tools to perform signal processing
tasks. For mathematicians the question may be more to ask
what are the relevant mathematical tools which can be used
to build such practical tools, and what are the concept that enable
applied scientists to make use of the underlying concepts.

There are different ways how the mathematical community can
contribute. On the one hand mathematicians can develop new
algorithms which help to solve pending problems more efficiently
(like the phase-retrieval problem, which has become a serious
mathematical subject area during the last decade), but on the
other hand it may be useful to derive new concepts. Based on
such concepts it may be able to prove that certain things
might be impossible (like a Gaborian orthonormal basis
for $\LtRN$, due to the Balian-Low Theorem) or provide
constructive methods which ensure certain approved conclusions,
respectively clarify the range of validity of a formula
(such as the Poisson Formula). Such considerations may also
lead to with granted convergence rates in
various functions spaces (such as weighted $\Lpsp$-spaces)
for the irregular sampling problem.

Also, in some case it is enough for the user to know that
she/he can rely on the expected performance of an algorithm,
{\it if well-defined} assumptions are satisfied, while in
other cases it might be good to understand the underlying
reasoning in order to modify or adapt existing tools to a
new setup.

Whatever the case is, it is always good to try to make the
concepts plausible and easy to use, if this is possible.
In addition, a tool which has a widespread range of
applicability is more likely to be used by many scientists,
as opposed to a very technical construction which is designed
only to settle a very specific problem.

In this sense the we believe that the concept of
{\it mild distributions} is a simple and widely applicable
concept which should be promoted further and which is
simple enough to be used in engineering or physics courses,
and of course in basic courses on Fourier Analysis with
an application oriented touch.

\subsection{The Heuristic View-point}

Let us return to the question of what a signal is. There is a
wide range of signals and signal processing tasks. For our illustration
let us choose an example from acoustic, where signals called
``music'' are produced, recorded, transmitted, e.g. via streaming
services and so on (see \cite{do01}, \cite{do02},\cite{bado17}).
This is something that it done (consciously
or unconsciously) by almost everybody in our modern world, using
a mobile phone or a radio. As a second example we will talk about
images, which are transmitted in a similar way, shared by friends
or for documentation of events.

So let us start with a piece of music, which we could buy -
meanwhile in an old-fashioned way - by obtaining a CD with this
3-minute piece of music stored in high quality. But what is
stored, and how is the information stored in a way that allows
the user to replay the song using suitable technical devices?
Is it really an (equivalence class of) measurable and square
integrable functions, i.e.\ a function $f(t)$ (to use physics
notation) of time which is well defined up to a null-set?
As a matter of fact we obtain (up to quantization) 44100 samples
of the sound recorded with the help of good microphones (actually
recording even frequencies beyond 20kHz).

Classical Fourier Analysis has been for almost 200 years a challenge
to the mathematical community and a number of deep results
stimulating the development of linear functional analysis,
but also Abstract Harmonic Analysis (AHA) or the Gelfand
Theory for commutative Banach algebras. Only about 100 years
after J.B.~Fourier's proposition the theory of Lebesgue integration
emerged which until now is viewed as the crucial building block
of AHA. One has to study the Banach convolution algebra $\LiGN$
(with respect to the Haar measure) in order to be able to analyze
the problem of spectral synthesis (see \cite{re68}). But does this
rich mathematical world help us to understand the effectiveness
of recording.

It is really necessary to leave the idea of a ``global Fourier
transform'' which - at least in principle - could be computed for
a whole symphony, but it would be require the best computers,
even nowadays, and at the end it would not be useful, due to the
bad localization of the building blocks, the pure frequencies.
The {\it Short-Time Fourier Transform} (STFT) appears to be the correct
tool, and leads to the theory of {\it Gabor Analysis}, i.e. a
(double) series representation of ``signals'' in terms of
time-frequency shifted copies of a {\it Gabor atom}, the so-called
Gabor representation of functions. This representation can be seen
to be at the basis of the modern MP3 compression method (see
\cite{ba05-1}).

One can say that the Segal algebra $\SORdN$ (called
{\it Feichtinger's algebra}, introduced in \cite{fe82-1}
and studied in \cite{ja18}) is an important corner stone
for the discussion of problems arising in connection
with Gabor Analysis (see e.g. \cite{fezi98}, \cite{feko98} in
\cite{fest98}, see also \cite{fest03}). The standard book
on the subject is \cite{gr01}. One can say that the dual
space, meanwhile called the space of {\it mild distributions}
(see \cite{fe19,fe20-1})
has been already used in various places, e.g. in \cite{fegr92-1}
(Theorem 5.4) in order to derive the so-called {\it kernel theorem}.

Together with the Hilbert space $\LtRdN$ the two spaces
form the so-called  Banach Gelfand Triple $\SOGTrRd$, see
\cite{cofelu08}, \cite{fe09}, \cite{fe18-3}, \cite{fe24-1}, \cite{fe24-1},
just to indicate that there is meanwhile a rich literature on the
subject, demonstrating the various areas of application outside
of {\it time-frequency analysis} (TFA). We will not pursue the
advantages of this particular setting but rather restrict
our attention to the proper use of the outer layer of this
Banach Gelfand triple, the space $\SOPRd$, which is endowed
naturally not only with the usual norm topology of a dual
space, but also with the $\wstd$topology. There are abstract
(functional analytic) arguments which imply that this space
is metrizable, because $\SORdN$ is a separable Banach space,
and hence convergence in the $\wstd$sense can be characterized
with the help of sequences (instead of the more complicated
use of so-called {\it nets}).

However, in order to avoid heavy functional analytic tools
in our discussion here we will pursue the essential features
based on the much easier (but equivalent) sequential approach
to mild distributions, as outlined in \cite{fe20-1}.

\section{The Mathematical Formulation}

In this subsection we summarize a couple of basic definitions.
We start with an elementary (alternative) characterization
of {\it Feichtinger's algebra} (\cite{rest00},\cite{ja18}), starting from scratch.

First we define the short-time Fourier transform (STFT) of
a continuous, bounded function $h \in \CbRd$ with respect
to a compactly supported {\it window function} $g \in \CcRd$.
Typically $g$ will be real-valued and non-negative, and symmetric
around zero, but one could choose any Schwartz functions, and
often it is recommended to use a Gauss function, such as the
density of the normal distribution, i.e. $g_0(t) = exp(-\pi |t|^2)$,
$t \in \Rdst$.
\begin{definition} \label{STFTdef12}
The Short-time Fourier transform of a signal $h$ with respect
to the window $g$ is given by:
\begin{equation}
V_gh (t,s) = \intRd h(y) g(y-t) e^{-2 \pi i s y} dy  = \FT( h \cdot T_t g)(s).
\end{equation}
 \end{definition}
Usually the definition is given for $g,h \in \LtRd$, and then one
can describe $V_gh$ using TF-shifts $\pi(\lambda) = M_s T_t$,
for $\lambda = (t,s) \in \TFd$,  as a scalar product in the Hilbert space $\LtRd$ (see \cite{gr01}):
$$  V_gh(t,s) = \langle h, M_s T_t g \rangle_\Ltsp =
    \langle h, \pilamg \rangle_\Ltsp. $$
For $d=1$ is customary to display $|V_g(h)|$ as a function in the (complex)
plane, with time in the horizontal and frequency in the vertical direction.

Whenever $h$ and $g$ have compact support  $V_gh$ will be supported on a strip,
say $ [-K,K]^d \times \Rdst \subset \TFd$, for some $K > 0$.
If furthermore both  $g,h$ belong to some Sobolev algebra $\HsRdN \hkr \CORdN$,  with $s > d/2$, then the set
of all products $ h \cdot T_t g$ (we can restrict our attention to
a set of parameters $t$ in some compact set, the rest is zero) is
uniformly bounded in $\HsRdN$, hence in the Fourier algebra
$\FLiRdN$\footnote{We write $\FLiRd$ for the range of $\LiRd$
under the Fourier transform, i.e.\ for $\{ \hatf \suth f \in \LiRd\}$. },
due to the Sobolev embedding theorem (and the fact that such a
Sobolev space is a pointwise algebra). Hence for any such pair
of functions $V_gh$ shows also good decay in the frequency direction
 and is definitely integrable
over phase space $\TFd$, i.e. (for fixed, non-zero $g$) on has
$$ \SOnorm{h} = \linorm{V_gh} = \intRtd |V_g(h)(\lambda)| d\lambda < \infty.$$

Instead of the usual definition which characterizes $\SORdN$ as the
subspace of all functions $f \in \LtRd$ such that $V_gf \in \LiRtd$
for one such nice bump-function $g$ we can build the space from
the building blocks described above. We fix $g$ as above and
call $h$ an {\it admissible atom} if it is of the above form,
with $\sonorm{h} :=\linorm{V_gh} < \infty$.
 Note that one has for some $C_g > 0$
$$   \ltnorm{h} \leq \linfnorm{h} + \linorm{h} \leq C_g \linorm{V_gh}. $$
\begin{proposition} \label{SOblock12}
We define $\SORd$ as the space of all absolutely convergent sums
of admissible atoms in $\CcRd \cap \HsRd$
\begin{equation} \label{SOdef12A}
\SORd =
 \{f=\sum_{n \geq 1}h_n\suth \sum_{n \geq 1}\sonorm{h_n}<\infty\}
 \end{equation}
This space coincides with $\SORdN$ as defined usually, with
the natural quotient norm as an equivalent norm, namely
$$ \|f\|_{atomic} := \inf_{admiss. repr.}  \sum_{n \geq 1} \sonorm{h_n}. $$
The sums are absolutely convergent in $\CORdN$ and  $\LiRdN$  and
consequently one has a continuous (and dense) embedding
$$ \SORdN \hkr \LiRdN \cap \CORdN \hkr \LtRdN. $$
\end{proposition}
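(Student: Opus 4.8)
The plan is to prove the single two‑sided estimate
$$ \SOnorm{f} \;\le\; \|f\|_{atomic} \;\le\; C\,\SOnorm{f}, $$
since this establishes at once that the atomic space of \eqref{SOdef12A} coincides with the usual $\SORdN$ and that $\|f\|_{atomic}$ is an equivalent norm. The convergence assertions and the embedding chain will then fall out of the pointwise inequality recorded just before the statement. Throughout I keep the usual description $\SOnorm{f}=\linorm{V_gf}$ as the reference norm.

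First I would read off from $\ltnorm{h}\le\linfnorm{h}+\linorm{h}\le C_g\linorm{V_gh}$ that every admissible atom satisfies each of $\linfnorm{h},\linorm{h},\ltnorm{h}\le C_g\SOnorm{h}$. Consequently, whenever $\sum_n\SOnorm{h_n}<\infty$ the partial sums of $\sum_n h_n$ are simultaneously Cauchy in $\CORdN$, $\LiRdN$ and $\LtRdN$; completeness of these three spaces (and uniqueness of the common limit as a locally integrable function) makes $f=\sum_n h_n$ well defined, with $\linfnorm{f}+\linorm{f}\le C_g\sum_n\SOnorm{h_n}$ and $\ltnorm{f}\le C_g\sum_n\SOnorm{h_n}$. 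Passing to the infimum over admissible representations yields the continuous embeddings $\SORdN\hkr\LiRdN\cap\CORdN\hkr\LtRdN$ with respect to $\|f\|_{atomic}$, and density holds because $C_c^\infty(\Rdst)\subset\CcRd\cap\HsRd\subset\SORd$ while $C_c^\infty(\Rdst)$ is already dense in each target space (mollify to gain smoothness, then truncate, controlling the $L^1$- and sup‑norm tails).

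For the inclusion \emph{atomic} $\subseteq$ \emph{usual} together with $\SOnorm{f}\le\|f\|_{atomic}$, I would use that $f\mapsto V_gf(\lambda)=\langle f,\pilamg\rangle_\Ltsp$ is continuous on $\LtRd$; since $\sum_{n\le N}h_n\to f$ in $\LtRd$ one obtains $V_gf=\sum_n V_gh_n$ pointwise, and monotone convergence gives $\linorm{V_gf}\le\sum_n\linorm{V_gh_n}=\sum_n\SOnorm{h_n}$. Taking the infimum over representations shows that every atomic $f$ lies in the usual $\SORd$ and that $\SOnorm{f}\le\|f\|_{atomic}$.

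The reverse inclusion \emph{usual} $\subseteq$ \emph{atomic}, with $\|f\|_{atomic}\le C\SOnorm{f}$, is the heart of the matter and the step I expect to be the main obstacle. Here I would fix the window to be a compactly supported smooth bump $g\in C_c^\infty(\Rdst)\subset\CcRd\cap\HsRd$ and exploit the STFT reconstruction formula through the synthesis operator
$$ T\phi \;:=\; \frac{1}{\ltnorm{g}^2}\intRtd \phi(\lambda)\,\pilamg\,d\lambda, \qquad \phi\in\LiRtd, $$
whose defining property is the inversion identity $T(V_gf)=f$. The crucial point is that $T$ is bounded from $\LiRtd$ into $(\SORd,\SOnorm{\ebbes})$: time–frequency covariance of the STFT gives $\SOnorm{\pilamg}=\SOnorm{g}$, so the integrand has $\SO$-norm $\ltnorm{g}^{-2}|\phi(\lambda)|\,\SOnorm{g}$ and therefore $\SOnorm{T\phi}\le\ltnorm{g}^{-2}\SOnorm{g}\,\linorm{\phi}$. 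Approximating $V_gf$ in $\LiRtd$ by finitely supported simple functions $\phi_N$ (constant on bounded cells of a fine partition of phase space) yields $f_N:=T\phi_N\to f$ in the $\SO$-norm, because $\SOnorm{f-f_N}\le\ltnorm{g}^{-2}\SOnorm{g}\,\linorm{V_gf-\phi_N}\to 0$, and each $f_N$ is a finite linear combination of cell–averaged time–frequency shifts of $g$, hence again an element of $\CcRd\cap\HsRd$. Selecting a subsequence with $\SOnorm{f-f_N}\le 2^{-N}\SOnorm{f}$ and telescoping, $f=f_1+\sum_{N\ge1}(f_{N+1}-f_N)$, then exhibits an admissible representation with $\sum_n\SOnorm{h_n}\le C\SOnorm{f}$. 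The delicate parts — for which I would lean on the standard references (\cite{gr01},\cite{ja18}) for completeness of $(\SORd,\SOnorm{\ebbes})$ and for the inversion formula — are the justification of $T$ as a vector‑valued (Bochner) integral actually landing in $\SORd$, and the verification that the cell‑averaged atoms retain compact support and the required $\HsRd$-smoothness with the uniform norm control that makes the telescoping series absolutely convergent.
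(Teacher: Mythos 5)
Your proposal is correct in substance, but it takes a genuinely different route to the hard inclusion (usual $\subseteq$ atomic) than the paper does. The paper argues by a double BUPU localization: since $\hatf \in \SORd$, one writes $\hatf = \sumkZd \hatf \cdot T_k\psi$ for a partition of unity, uses the fact that $\SORdN$ is a pointwise Banach ideal in $\FLiRdN$ to get $\sumkZd \sonorm{\hatf\cdot T_k\psi} \leq C_1 \sonorm{f}$, notes that each $f_k = \IFT(\hatf\cdot T_k\psi)$ is band-limited and hence lies in every $\HsRd$, and then repeats the localization on the time side (via Fourier invariance of $\SORdN$) to obtain the doubly indexed atoms $h_{k,j} = f_k\cdot T_j\psi$, whose $\HsRd$-smoothness comes from band-limitedness and whose compact support comes from the BUPU, with $\sum_{k,j}\sonorm{h_{k,j}} \leq C_4\sonorm{f}$. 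You instead lean on the $\Msp^{1}$-inversion formula: the boundedness of the synthesis operator $T\phi = \ltnorm{g}^{-2}\intRtd \phi(\lambda)\,\pilamg\,d\lambda$ from $\LiRtd$ into $\SORdN$ (a Bochner-integral argument using norm-boundedness and continuity of $\lambda \mapsto \pilamg$ in $\SOsp$), discretization of $V_gf$ by simple functions constant on bounded cells, and a telescoping series. This is a legitimate alternative: your cell averages $\int_Q \pilamg\,d\lambda$ with $g \in C_c^\infty(\Rdst)$ are indeed admissible atoms, since their support sits in $\supp(g)$ plus the time-projection of the bounded cell $Q$, and differentiation under the integral gives smoothness, so they lie in $\CcRd\cap\HsRd$; the telescoping with $\SOnorm{f-f_N}\leq 2^{-N}\SOnorm{f}$ then delivers $\|f\|_{atomic}\leq C\SOnorm{f}$ with an essentially explicit constant. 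What each approach buys: the paper's proof is constructive and stays entirely within multiplicative/amalgam machinery (BUPUs, the ideal property, Fourier invariance), producing a concrete decomposition indexed by lattice points; yours is shorter and more functional-analytic, but hinges on the absolute convergence of the inversion integral in the $\SOsp$-norm, an input of essentially the same depth as the amalgam characterization the paper cites from \cite{gr01} and \cite{ja18} — so neither argument is more self-contained than the other. Your easy direction (atomic $\subseteq$ usual, plus the embeddings into $\CORdN$, $\LiRdN$, $\LtRdN$ and density via $C_c^\infty(\Rdst)$) matches the paper's. One cosmetic point: the inequality $\linorm{V_gf} \leq \sum_n \linorm{V_gh_n}$ follows from monotone convergence applied to the nonnegative series $\sum_n |V_gh_n|$ together with the pointwise bound $|V_gf| \leq \sum_n |V_gh_n|$; stated as you have it, the appeal to monotone convergence is slightly misplaced but easily repaired.
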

\begin{proof}
Starting from the usual definition (see e.g. \cite{gr01}) one knows that
$\SORdN$ is a Banach space. For fixed, non-zero $g \in \HsRd \cap \CcRd \subset \FLiRd \cap \CcRd$ the norm on admissible atoms is the usual
one. Absolutely convergent sums are thus uniformly and absolutely
convergent. Since $\SORdN$ is a Banach space the definition given above
describes clearly a subspace of $\SORdN$ (as defined usually), with
a continuous embedding.

On the other hand the various characterizations of $\SORdN$ a given
e.g. in \cite{ja18} imply that any $f \in \SORd$ (defined in the classical
way as Wiener amalgam space $\WFLiliRd$) can be decomposed into an absolutely
convergent sum of the above form, by way of the following reasoning:
Given $f \in \SORd$ also $\hatf \in \SORd$, and thus it can be
decomposed into an absolutely convergent sum in $\FLiRdN$ of the form
$$ \hatf = \sumkZd  \hatf \cdot  T_k \psi $$
for some test function $\psi$ defining a BUPU, i.e. with $\sumkZd T_k \psi(x) \equiv 1.$ For any test function $\tau \in \SORd$ satisfying
$ \tau(x) \psi(x) = \psi(x) $ one also has
$$ \hatf  = \sumkZd  (\hatf \cdot T_k \psi) \cdot T_k \tau, $$
and since $\SORdN$ is a pointwise ideal in $\FLiRdN$ this implies
that also the sum is absolutely convergent in $\SORdN$, i.e.\
$$\sumkZd \sonorm{\hatf \cdot T_x \psi}\leq C_1\sonorm{\hatf}=
C_1 \sonorm{f}. $$
Each of the functions $f_k :=  \IFT(\hatf \cdot T_k \psi)$ is band-limited
and thus belongs to any Sobolev space $\HsRd$ for $s \geq 0$.
On the other hand, by the Fourier invariance of $\SORdN$ one can
decompose each $f_k$ now
on the time-side in order to come up with the claimed decomposition,
since $$ \sumjZd   \sonorm{f_k \cdot T_j \psi} < C_2 \sonorm{f_k}. $$

\noindent
The countable family given by $h_{k,j} = f_k \cdot T_j \psi $ is thus
absolutely convergent, with
$$ \sum_{k,j} \sonorm{h_{j,k}} \leq C_3 \sum_{k \in \Zdst} \sonorm{f_k}
\leq C_4 \sonorm{f}.  $$
Thus the key arguments needed to verify Proposition \ref{SOblock12} have
been given. The rest is left to the reader.
\end{proof}

\begin{remark}
An obvious consequence of the above proposition is the fact that
the compactly supported functions in $\HsRdN$ (for any $s \geq 0$)
form a dense subspace of $\SORdN$.
\end{remark}

\subsection{Why Should We Talk about Mild Distributions}

Sometimes the author has been confronted with the argument
that the elements of the dual space of general function
algebras should not be called {\it distributions} or
{\it generalized functions} as this terminology appears
to be reserved to members of the dual space of spaces of
infinitely differentiable functions on $\Rdst$.

Given the aim of having a tool which is not restricted
to the Euclidean spaces $\Rdst$ (respectively Lie groups,
where differentiability is meaningful), but also to general
locally compact Abelian (LCA) groups $\cG$ we view it as
important to reduce the request on the {\it space of test
functions}. It should be an {\it algebra}  $\Asp$ of continuous functions $f$
because then the members of the dual space can be localized,
by the simple rule $\sigma \cdot h(f) = \sigma(h \cdot f)$,
for $f \in \Asp, h \in \AspP$.
Assuming furthermore that $\Asp$ contains
functions with arbitrary small support one can define the
support $\supp(\sigma)$ of such a distribution $h$. Finally the
Fourier transform of $\sigma$ can be defined by the (simple,
usual) rule $\hatsi(f) = \sigma(\hatf), \, f \in \Asp$, if
$\Asp$ is Fourier invariant.

All this makes sense in the spirit of {\it generalized functions}
because if $\Asp \subset \LiRd \cap \CORd$, i.e.\ if it is a
continuous and absolutely (Riemann) integrable function (see
\cite{feja20})  on $\Rdst$ then any continuous and bounded function $h \in \CbRd$ defines a linear functional
$$ \sigma_h(f) := \intRd  h(t) f(t)dt. $$

Another aspect is the choice of the topology in the space
$\Asp$ of test functions. For the case $\Asp = \ScRd$, the
{\it Schwartz space of rapidly decreasing functions} convergence
in $\ScRd$ can be described only with the help of a (countable)
family of (semi)norms. Such a family allows to turn $\ScRd$
into a metric space, which is in fact complete. As a matter of
fact one can state:  $\ScRd$ is a {\it nuclear Frechet space}.
Convergence in the dual space is even more tricky to be described
correctly.

 However, if  $\AspN$ 
 is a simple (separable) Banach space  then
one has the natural $\wstd$convergence of (bounded) sequences
as a very natural tool. For the case $\AspN = \SORdN$ we
 will call this form of convergence {\it mild convergence}. It is a trivial consequence of the estimate
\begin{equation}\label{SOPconvwst}
  |\sigma_n(f) - \sigma_0(f)| \leq \SOPnorm{\sigma_n - \sigma_0} \SOnorm{f}
  \to 0, \quad \mbox{for} \,\, n \to \infty,
\end{equation}
that norm convergence of $\siginf$ implies mild convergence. Obviously
the converse is often not valid. Thus we have $T_x f \to 0$ mildly,
for $x \to \infty$ for any $f \in \SOPRd$ (even for $\siSOP$), but $\SOPnorm{T_x f} = \SOPnorm{f} \neq 0$ for any non-zero $f$.
It is also clear that $\delta_x \to \delta_y$ as $|x-y| \to 0$
mildly, but one has $ \SOPnorm{\delta_x - \delta_y} = 2 $ for $x \neq y$
(see \cite{fe17} or \cite{fe22} for precise statements).

\section{The Advantages of Mild Distributions}

It has been explained in earlier papers at length why
the space $\SOPRdN$ of mild distributions is highly useful
and provides a suitable tool for many mathematical problems
arising in applications. In fact, it can been seen as a
good substitute for many specific mathematical objects
arising in different context. It is both the simplicity
and the good properties that makes it a perfect tool
for many areas where Fourier Analysis is playing a role,
from classical Fourier Analysis to systems theory, or
from mobile communication to pseudo-differential operators.
Among the Banach spaces of mild distributions  $\MpqRdN$
called {\it modulation spaces}, with $1 \leq p,q \leq \infty$,
with $ \SORd = \MiiRd$ and $\SOPRd = \MininRd$ as smallest
respectively largest member of the family play of course a
special role.

The space $\SOPRdN$ contains not only all the function
spaces $\LpRd$ (or even Wiener amalgam spaces of the
form $\WLplqRd$), but also all the periodic functions
(with any form of periodicity), or discrete measures
supported on an arbitrary discrete lattice. Recently
it has been recognized of being very useful in the context
of {\it quasi-crystals}, replacing the theory of transformable
measures as developed in \citeX{argi90}.

\section{The Spirit of Banach Gelfand Triples}

We do not go into the details of the Banach Gelfand Triple $\SOGTrRd$,
but would like to describe a few heuristic arguments how to make use
of this tool. The comparison with the chain $\QRC$ of rational,
real and complex numbers is often helpful.

At the inner layer (the Feichtinger Algebra $\SORdN$) one has
continuous and integrable functions, which can be sampled, and also
their Fourier transforms (also in $\SORd$)
have the same property, thus Fourier Inversion is valid in a pointwise
sense (this is actually the basis of classical summability theory).
In addition, one is not far from the finite dimensional case, because
given $f \in \SORd$ it is possible to recover a good approximation
of $\hatf$ via a suitable application of the FFT algorithm.

The completion of $\SORd$ with respect to the usual scalar product
gives a valid copy of the Hilbert space $\LtRdN$, which can be embedded
into the dual space $\SOPRdN$ in the usual way. For example, the
Fourier Transform is then a unitary automorphism of $\LtRdN$ (Plancherel's
Theorem). But is also extends in a unique $\wwst$-continuous fashion
to all of $\SOPRd$, using the simple definition $\hatsi(f) = \sigma(f),
f \in \SORd$. Since $\ScRd$ is dense in $\SORdN$ this definition
(which does not require to use the Schwartz theory) can be seen as
a restriction of the generalized Fourier transform for tempered
distributions to $\SOPRd \hkr \ScPRd$ (but we avoid this description here).

As pointed out in \cite{fe20-1} one can describe the space of mild
distributions as a kind of (sequential) completion of the space
$\CbRd$ of bounded, continuous and complex-valued functions, or even
as completion of $\SORd$ itself. Either
one makes use of the analogy with the construction of real numbers
from the rational numbers (by defining real numbers as equivalence
classes of Cauchy sequences of rational numbers), or one recalls that
one can generate such an approximating sequence by regularization
(e.g. by taking partial sums of Gabor expansions, or by convolving
and localizing a given mild distributions).

Recalling that a mild distributions is a (tempered) distribution having
a bounded STFT we can easily describe the abstract concept of $\wstd$convergence by looking at the STFTs of the approximating
sequences. Instead of norm convergence in $\SOPRdN$ (which corresponds
to uniform convergence at the level of the STFT, for any fixed window)
one just has locally uniform convergence. In other words, we have:
\begin{lemma} \label{wstSOPconv1}
Let $g \neq 0$ be any window function in $\SORd$. Then
a bounded sequence $(\sigma_n)_{n \geq 1}$ in $\SOPRdN$
converges to $\sigma_0$ if and if only if the following is true:\newline
Given $R > 0$ and $\espo$ there exists $n_0$ such that one has
for $n \geq n_0$:
$$ |V_g(\sigma_n)(\lambda)-V_g(\sigma_0)(\lambda)| = |V_g(\sigma_n- \sigma_0) (\lambda)| \leq \veps, \quad |\lambda| \geq R. $$
\end{lemma}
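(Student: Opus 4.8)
The plan is to work throughout with the identity $V_g\sigma(\lambda) = \langle \sigma, \pilamg\rangle$, valid for $\siSOPRd$ via the $\SORd$--$\SOPRd$ duality (extended STFT pairing, see \cite{gr01}), together with the elementary but decisive observation that $\pilamg \in \SORd$ for every $\lambda \in \TFd$, by the time-frequency invariance of Feichtinger's algebra. I read the displayed estimate as asserting \emph{locally uniform} convergence of the STFTs, i.e.\ uniform convergence on each compact set $\{|\lambda| \leq R\}$, in accordance with the sentence preceding the lemma; this is the statement I would prove. First I would record the two facts driving the two implications: (i) for fixed nonzero $g$ the orbit map $\lambda \mapsto \pilamg$ is norm-continuous from $\TFd$ into $\SORdN$ (see \cite{ja18}), hence uniformly continuous on compacta; and (ii) the coherent system $\{\pilamg : \lambda \in \TFd\}$ is total in $\SORd$, since any $\sigma$ annihilating it has $V_g\sigma \equiv 0$ and is therefore zero by injectivity of the STFT.

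For the direction ``mild convergence $\Rightarrow$ locally uniform STFT convergence'' I would argue as follows. Pointwise convergence $V_g\sigma_n(\lambda) \to V_g\sigma_0(\lambda)$ is immediate, since each $\pilamg$ is a legitimate test function in $\SORd$ and $\sigma_n \to \sigma_0$ weakly${}^*$. To upgrade this to uniformity on $\{|\lambda| \leq R\}$ I would exploit the boundedness hypothesis $\SOPnorm{\sigma_n} \leq M$: for any $\lambda, \lambda'$ one has
\[ |V_g\sigma_n(\lambda) - V_g\sigma_n(\lambda')| = |\langle \sigma_n, \pi(\lambda)g - \pi(\lambda')g\rangle| \leq M\,\SOnorm{\pi(\lambda)g - \pi(\lambda')g}, \]
so that by fact (i) the family $\{V_g\sigma_n\}_{n}$ is equicontinuous on the compactum $\{|\lambda|\leq R\}$, \emph{uniformly in} $n$. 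Equicontinuity together with pointwise convergence then yields uniform convergence there by the standard $\veps/3$ covering argument (cover $\{|\lambda|\leq R\}$ by finitely many balls on which every $V_g\sigma_n$ oscillates by less than $\veps/3$, and invoke pointwise convergence at the finitely many centers).

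For the converse I would use only the pointwise consequence of the hypothesis, namely $V_g\sigma_n(\lambda) \to V_g\sigma_0(\lambda)$ for each $\lambda$, i.e.\ convergence of $\langle\sigma_n,\pilamg\rangle$ on the total set of fact (ii). Combined with the uniform bound $\SOPnorm{\sigma_n} \leq M$, a routine three-$\veps$ density argument then upgrades convergence on this total set to convergence on all of $\SORd$: given $\fSORd$, approximate it in $\SOnorm{\ebbes}$ by a finite combination of coherent states and split $|\sigma_n(f) - \sigma_0(f)|$ accordingly, controlling the two ``tail'' terms by $(M + \SOPnorm{\sigma_0})$ times the approximation error. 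This gives $\sigma_n \to \sigma_0$ weakly${}^*$, i.e.\ mild convergence. (Alternatively one may invoke the STFT inversion formula $\sigma(f) = \ltnorm{g}^{-2}\intRtd V_g\sigma(\lambda)\,\overline{V_g f(\lambda)}\,d\lambda$, with $V_g f \in \LiRtd$ and dominated convergence, the uniform bound on $V_g\sigma_n$ furnishing the integrable majorant.)

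The main obstacle is the uniform-in-$n$ equicontinuity in the forward direction: everything there reduces to the norm-continuity of $\lambda \mapsto \pilamg$ into $\SORd$ (fact (i)), which is precisely what converts the weak${}^*$/pointwise information into genuine local uniformity. Once that continuity is granted, both implications are $\veps/3$ bookkeeping, and the two distinct roles of the boundedness hypothesis — as the Lipschitz-type constant controlling the oscillation of $V_g\sigma_n$, and as the uniform bound taming the tails respectively supplying the integrable majorant — become transparent.
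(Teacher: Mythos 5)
Your proof is correct, and you were right to read the displayed condition as uniform convergence on the compact sets $\{|\lambda| \leq R\}$: the inequality $|\lambda| \geq R$ in the statement is a misprint, as the preceding sentence about ``locally uniform convergence'' of the STFT confirms (taken literally, the printed condition would force convergence uniformly off arbitrarily small balls, i.e.\ essentially norm convergence, contradicting the stated intent).

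One thing to know: the paper states this lemma \emph{without proof}, so there is no official argument to measure yours against. That said, your route is precisely the one the surrounding text envisages. The converse direction is an instance of the principle the paper invokes in the section on convergence seen practically (and records separately as a lemma in the sequential-approach material): for a sequence bounded in $\SOPRdN$, $\wstd$convergence need only be tested on a dense, or as in your case total, subset of $\SORd$ --- here the coherent family $\{\pilamg\}$, whose totality follows exactly as you say from injectivity of the STFT plus Hahn--Banach. The forward direction is where you add genuine content beyond the paper's sketch: the uniform-in-$n$ equicontinuity of $\{V_g\sigma_n\}$ via $|V_g\sigma_n(\lambda)-V_g\sigma_n(\lambda')| \leq M \SOnorm{\pi(\lambda)g - \pi(\lambda')g}$ and the norm-continuity of the orbit map $\lambda \mapsto \pilamg$ (valid since $\SORdN$ is a homogeneous Banach space with continuous TF-shifts), which is the correct mechanism for upgrading pointwise $\wstd$information to local uniformity; the paper nowhere spells this out. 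Your alternative converse via the weak STFT inversion formula with $V_gf \in \LiRtd$ as integrable majorant is also sound, since $\linfnorm{V_g\sigma_n} \leq C\,\SOPnorm{\sigma_n} \leq CM$. The only cosmetic caveat is the conjugation convention in the pairing, i.e.\ $V_g\sigma(\lambda) = \sigma\big(\overline{\pilamg}\big)$ for the bilinear duality; this is harmless because $\SORd$ is invariant under conjugation, but it is worth stating once so that the equicontinuity estimate and the totality argument are literally about elements of $\SORd$.
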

From now on we will use the alternative term {\it mild convergence}
instead of $\wstd$convegence for bounded sequences.

When it comes to all the standard operations (translation, modulation,
dilation, taking the Fourier transform, and so on) it is a good idea
to recall that all these operations are in fact $\wwst$-continuous,
i.e.\ map bounded, $\wstd$convergent sequences as above into
equally $\wstd$convergent bounded sequences. Since $\SORd$ is
$\wstd$dense in $\SOPRd$ (hence also $\LtRd$) this implies that
the extended transform is uniquely determined.
For details see \cite{cofelu08}
or \cite{fe09}.

\section{Convergence of Mild Distributions seen Practically}

There is a very natural concept of convergence in dual Banach spaces, such
as $\SOPRd$. From the beginning there is a {\it bilinear mapping}
$$(f,\sigma): \SORd \times \SOPRd \mapsto \sigma(f)$$
with the obvious (by definition) estimate
$$ |\sigma(f)| \leq \SOPnorm{\sigma} \SOnorm{f}, \quad \fSO,\siSOP. $$
In fact, for a given bounded linear function $\sigma$ on $\SORdN$
the norm $\SOPnorm{\sigma}$ is the best constant $C \geq 0$ for estimates
of the form $ |\sigma(f)| \leq C \SOnorm{f}, \fSO.$.

Therefore it is natural to call a sequence \tred{\it mildly convergent}
(technically: $\wstd$convergent) if one has
$$ \lim_{n \to \infty}  \sigma_n(f) = \sigma_0 (f), \quad \fSORd. $$
It turns out that it better to restrict the attention to sequences
$\siginf$  which are bounded in $\SOPRdN$ (i.e. with a common
upper constant), because then the uniform boundedness principle
can be invoked in order to derive that it is just enough that
$\sigma_n(f)$ is a Cauchy sequence for every $f$ from a dense
subspace of $\SORdN$. In fact, it then follows by approximation
that it is convergent in $\Cst$ for {\it every}  $f \in \SORd$
and consequently the resulting limit defines a bounded
linear functional $\sigma_0$ (with the same estimate).
Anyway, we write $\signo$ in such a situation, and call
$\sigma_0$ the mild limit of the sequence $\siginf$. 

Recalling that $\SORd$ can be viewed as a subspace of $\SOPRd$
(the nice signals inside the space of mild distributions) it is
perhaps interesting to observe that  (using some functional analysis
as well)  such a (bounded) sequence is $\wstd$convergent
if and only if one has for
any projection $\PV$ onto a finite dimensional subspace
$\Vsp \subset \SORd$ (which is then automatically closed even in $\SOPRd$
 and allows in fact such a projection, by the Hahn-Banach Theorem)
\begin{equation}\label{projconvsign}
   \lim_{n \to \infty} \PV(\sigma_n) =  \PV(\sigma_0).
\end{equation}

Starting from this information it is  easy to verify that this type of
convergence has a very natural meaning in the context of the STFT.
One has $\wstd$convergence of $\siginf$ if and only if one has
uniform convergence of $V_g(\sigma_n)(\lambda)$ over compact
subsets of $\TFd$, or equivalently: For any Gabor family
generated by a pair $(g,\Lambda)$ with $g \in \SORd$ and
 some lattice $\Lambda$ (of the form $\Lambda = \Asp(\Zdst)$,
 for some non-singular $2d \times 2d$-matrix $\Asp$)
one has (unconditional) convergence of the finite Gabor sums
to $f$ in $\SOPRdN$. Spelled out in detail be can formulate
it as a proposition characterizing mild convergence: 

\begin{proposition} \label{wstconvsign3}
Let $\siginf$ be a bounded sequence in $\SOPRdN$. \newline
Recalling that for a pair $(g,\Lambda)$,
with $g \in \SORd$,  which generates a Gabor frame in $\LtRd$ also the
dual Gabor atom $\gd$ belongs to $\SORd$ we have: $\signo$ if and only
if one the following is true:
Given any finite  subset $F \subseteq \Lambda$ and $\espo$ one can find
some $n_0 \in \Nst$ such that one has for $n \geq n_0$:
\begin{equation}\label{GabSerconv2}
\SOnorm{\sum_{\lambda \in F}(\sigma_n-\sigma_0)(\pilamgd) \pilamg}\leq \veps.
\end{equation}
\end{proposition}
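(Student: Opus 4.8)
The plan is to reduce both implications to the Gabor frame expansion of test functions at the level of $\SORd$, and then to exploit that this expansion has $\lisp$-summable coefficients. First I would record the central identity. Since $(g,\Lambda)$ generates a Gabor frame in $\LtRd$ with dual window $\gd \in \SORd$, every $\fSORd$ satisfies the reconstruction formula
$$ f = \sum_{\lambda \in \Lambda} V_g f(\lambda)\, \pilamgd, \qquad V_g f(\lambda) = \langle f, \pilamg\rangle_\Ltsp, $$
and the coefficient sequence $(V_g f(\lambda))_{\lambda \in \Lambda}$ lies in $\lisp(\Lambda)$, because $V_g f$ is the STFT of an $\SORd$-function and therefore has $\lisp$-summable samples along the lattice $\Lambda$. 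Since $\pilam$ acts isometrically on $\SORdN$, the series even converges absolutely in $\SORdN$. Applying any $\siSOP$ and using its continuity then yields the pairing formula
$$ \sigma(f) = \sum_{\lambda \in \Lambda} V_g f(\lambda)\, \sigma(\pilamgd). $$

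For the forward implication I would assume $\signo$. Each $\pilamgd$ is a legitimate test function in $\SORd$, so $(\sigma_n - \sigma_0)(\pilamgd) \to 0$ for every fixed $\lambda$. The isometry of $\pilam$ on $\SORdN$ gives the crude bound
$$ \SOnorm{\sum_{\lambda \in F}(\sigma_n-\sigma_0)(\pilamgd)\,\pilamg} \leq \SOnorm{g}\sum_{\lambda \in F}\bigl|(\sigma_n-\sigma_0)(\pilamgd)\bigr|, $$
and as $F$ is finite the right-hand side drops below any $\veps$ for all $n$ large enough, which is the asserted condition.

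For the converse I would first specialize the hypothesis to singletons $F=\{\lambda\}$: this forces $\SOnorm{g}\,|(\sigma_n-\sigma_0)(\pilamgd)| \to 0$, hence coefficientwise convergence $\sigma_n(\pilamgd) \to \sigma_0(\pilamgd)$ for every $\lambda$. Boundedness of $\siginf$ in $\SOPRdN$ supplies the uniform estimate $|\sigma_n(\pilamgd)| \leq \SOPnorm{\sigma_n}\SOnorm{\gd} \leq C\,\SOnorm{\gd}$. Fixing $\fSORd$ and inserting $\sigma_n - \sigma_0$ into the pairing formula gives
$$ |\sigma_n(f) - \sigma_0(f)| \leq \sum_{\lambda \in \Lambda} |V_g f(\lambda)|\,\bigl|(\sigma_n-\sigma_0)(\pilamgd)\bigr|. $$
Here each summand is dominated by $2C\,\SOnorm{\gd}\,|V_g f(\lambda)|$, which is summable since $(V_g f(\lambda)) \in \lisp(\Lambda)$, and it tends to $0$ for every $\lambda$; dominated convergence in $\lisp(\Lambda)$ then forces $\sigma_n(f) \to \sigma_0(f)$. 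As $f$ was arbitrary, this is exactly $\signo$.

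The functional-analytic bookkeeping is routine; the real content sits in the two quantitative features of the $\SORd$-Gabor expansion invoked above, namely the reconstruction identity $f = \sum_\lambda V_g f(\lambda)\pilamgd$ and the $\lisp(\Lambda)$-membership of the analysis coefficients. Both rely essentially on $\gd \in \SORd$ (guaranteed in the statement) together with the amalgam property of the STFT on $\SORd$; granting these standard facts from the theory of Feichtinger's algebra, the remaining estimates are the elementary ones displayed above. The single point where care is needed is interchanging $\sigma$ with the infinite sum, which is legitimate precisely because the expansion converges in $\SORdN$ and $\sigma$ is $\SORd$-continuous.
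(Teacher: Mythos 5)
Your proof is correct, and it is worth noting that it supplies considerably more detail than the paper itself, which offers no formal proof: after the proposition the author only remarks that the finite sums live in a finite-dimensional (hence closed) subspace of $\SORdN$, where norm convergence is equivalent to convergence of coordinates, so that condition (\ref{GabSerconv2}) reduces to coefficientwise convergence $(\sigma_n-\sigma_0)(\pilamgd)\to 0$; the converse direction is then implicitly covered by the earlier discussion that, for bounded sequences, it suffices (via uniform boundedness) to test $\wstd$convergence on a dense subspace, here the span of the Gabor atoms $\pilamg$, which is dense because the Gabor expansion of any $\fSORd$ converges in $\SOsp$-norm. Your argument uses the same two pillars --- reduction to coefficientwise convergence and the $\SORd$-convergent Gabor expansion with $\lisp(\Lambda)$-coefficients --- but replaces the paper's density-plus-boundedness (three-epsilon) mechanism with an explicit dominated-convergence argument in $\lisp(\Lambda)$: the pairing formula $\sigma(f)=\sum_{\lambda\in\Lambda} V_g f(\lambda)\,\sigma(\pilamgd)$ together with the uniform bound $2C\,\SOnorm{\gd}\,|V_g f(\lambda)|$ on the summands. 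This buys a self-contained quantitative proof that makes the role of the amalgam property of the STFT on $\SORd$ (i.e.\ $\lisp$-summability of lattice samples of $V_g f$) completely explicit, at the modest cost of invoking that property as a black box; the paper's sketch, by contrast, leans on general functional-analytic principles already set up in the text. Two small points to keep clean: the isometry of $\pilam$ on $\SORdN$ holds for the paper's preferred (Gaussian-window STFT) norm, and uniform boundedness of TF-shifts would suffice anyway; and in the converse you should note that $\SOPnorm{\sigma_0}\leq C$ as well (or simply that $\sigma_0$ is a fixed bounded functional), so that your dominating sequence is indeed summable --- both are cosmetic, not gaps.
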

In fact, since we are talking about a finite-dimensional closed subspace
of $\SORdN$ (but also of $\LtRd$ or $\SOPRd$) it is clear that norm convergence
in $\SOsp$ is equivalent to the convergence of coordinates and thus in any
of the other norms. We have chosen the strongest possible norm to have the
emphasize the most impressive conclusion for $\wstd$convergent sequences.

\begin{remark}\label{conferrecord}
Viewed from a more practical point of view one can say: a bounded
sequence $\siginf$ of mild distributions representing the information
stored in a digital recording of a piece of music (or of an image scene)
is close to the ``true underlying piece of music'' if it contains all the
required information to reproduce the sound of the recorded piece of music
for the duration of the song recorded, and up to a certain frequency.

Isn't it true, that we are completely satisfied with the ``exact reproduction''
of a piece of music when replayed on by a good device using the information
stored on a high quality CD, which contains the sampled version (and thus
all the required information for exact recovery up to $20$ kHz) of the musical
signal, for the duration of the song.

In this sense a digitally stored version of a song is nothing but a
$\wstd$approximation of the true signal (and a bat would complain that
the high frequencies are really missing!) using information from a finite
dimensional subspace of mild distributions. Similar comments could be given
for a high-resolution pixel image taken with a modern digital camera. \end{remark}

It is also possible to derive from this result that a bounded set
$M \subset \SORdN$ is {\it relatively compact} if and only if for every
$\epso$ one can find (in the situation of Proposition \ref{wstconvsign3})
a finite subset $F_0 \subset \Lambda$  such that one has:
 for any finite (or even infinite) set $F \supseteq F_0$ in $\Lambda$ one has:
\begin{equation}\label{TFtight3}
  \sonorm{f - \sum_{\lambda \in F}  \langle f, \pilamgd \rangle \pilamg}
  \leq \veps, \quad f \in M.
\end{equation}

\section{Extending Operators to Mild Distributions}

In this section we want to indicate how the elementary approach to
mild distributions (e.g. by the sequential approach, or using TF-analysis,
or more sophisticated functional-analytic methods) can be used to show
how many important operators can be extended naturally to mild distributions.

Recall that the Segal algebra $\SORdN$ has a large number of good properties.
Not only is it isometrically invariant under time-frequency shifts (and is
essentially the smallest Banach space with this property), it is also Fourier invariant (again isometrically, if one uses the STFT with a Gaussian window
for the norm), but also invariant under automorphisms (such as rotations, dilates) or even the unitary operators arising from the metaplectic group,
including Fractional Fourier transforms. It is also a Banach algebra under
convolution (in fact an Banach ideal in $\LiRdN$) and a pointwise algebra
(in fact, a Banach ideal inside of the Fourier algebra $\FLiRdN$).

These good properties allow to transfer, in fact extend in a unique $\wwst$-continuous way many of the operators to the space of mild distributions. In the terminology of the Banach Gelfand Triple $\SOGTrRd$
this means, that a unitary mapping leaving $\SORdN$ invariant can be
viewed as a Banach Gelfand Triple automorphism. We do not go into the
technical details here, but rather point to Theorem 7.3.3 in \cite{feko98}
which reads as follows:
\begin{theorem}
(Extension of Unitary Gelfand Triple Isomorphism)   \newline
A unitary mapping $U$ acting from $\Ltsp(G_1)$ to $\Ltsp(G_2)$ extends
to an isomorphism of the corresponding BGTr $\SOGTr$ if and only if
the restrictions of $U$ and $U^*$ are bounded linear operators
between $\SOsp(G_1)$ and $\SOsp(G_2)$.
\end{theorem}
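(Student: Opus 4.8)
The plan is to prove the two implications separately, recalling first that an isomorphism of $\SOGTr$ is a single linear map restricting, under the canonical embeddings $\SOsp \hkr \Ltsp \hkr \SOPsp$, to a Banach-space isomorphism at the inner layer, to a unitary at the Hilbert layer, and to a $\wwst$-continuous isomorphism at the outer layer. Write $V := U|_{\SOsp(G_1)}$ and $W := U^*|_{\SOsp(G_2)}$ for the restrictions appearing in the hypothesis. The forward direction is immediate: if $U$ already extends to such an isomorphism, then it restricts to a topological isomorphism $\SOsp(G_1) \to \SOsp(G_2)$, so $U$ and $U^{-1}$ are bounded between the $\SOsp$-layers, and since $U$ is unitary, $U^* = U^{-1}$ is bounded there as well, which is the assertion.

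For the converse I would first establish the inner-layer isomorphism. Because $U^*U = \mathrm{id}$ and $UU^* = \mathrm{id}$ hold on the $\Ltsp$-layer and $\SOsp$ embeds injectively into $\Ltsp$, for $f \in \SOsp(G_1)$ the element $Vf = Uf$ lies in $\SOsp(G_2)$ and $W(Vf) = U^*Uf = f$; symmetrically $V(Wh) = h$ for $h \in \SOsp(G_2)$. Thus $V$ and $W$ are mutually inverse bounded maps, so $V : \SOsp(G_1) \to \SOsp(G_2)$ is a Banach-space isomorphism. This provides the inner layer of the claimed isomorphism, while the Hilbert layer is the hypothesis itself.

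It then remains to produce and identify the outer-layer extension. I would set $\tilde U := W'$, the Banach-space adjoint of $W : \SOsp(G_2) \to \SOsp(G_1)$, a bounded map $\SOPsp(G_1) \to \SOPsp(G_2)$ given by $\langle \tilde U \sigma, f\rangle = \langle \sigma, W f\rangle$ for $\sigma \in \SOPsp(G_1)$ and $f \in \SOsp(G_2)$. Being an adjoint, $\tilde U$ is automatically $\wwst$-continuous, and since $W$ is an isomorphism with inverse $V$, the map $\tilde U$ is an isomorphism with inverse $V' : \SOPsp(G_2) \to \SOPsp(G_1)$. The one substantive check is consistency with the Hilbert layer: for $h \in \Ltsp(G_1)$ and $f \in \SOsp(G_2)$ one computes $\langle \tilde U h, f\rangle = \langle h, U^* f\rangle = \langle U h, f\rangle$, the last step being precisely the unitarity of $U$, so $\tilde U$ restricts to $U$ on the embedded copy of $\Ltsp(G_1)$. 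Uniqueness of the extension is then free, since $\SORd$ is $\wstd$dense in $\SOPRd$ and any $\wwst$-continuous map agreeing with $U$ there is determined.

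The main obstacle is this gluing step rather than any deep estimate: one must verify that a single operator $\tilde U$ restricts correctly and compatibly to all three layers, and it is exactly here that the full force of unitarity — not merely boundedness of $U$ and $U^*$ — is used. The care required is in bookkeeping the duality pairing against the $\Ltsp$-inner product, in particular the (conjugate-)linearity convention used to embed $\Ltsp$ into $\SOPsp$, which determines whether the Hilbert adjoint $U^*$ or the transpose is the correct map to dualize. Once this consistency is settled, boundedness, invertibility at each layer, and $\wwst$-continuity all follow formally.
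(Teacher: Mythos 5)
Your proof is correct, but note that the paper itself offers no proof to compare against: it explicitly declines to give the technical details and simply quotes this statement as Theorem 7.3.3 of \cite{feko98}. Your argument is the standard one and, in spirit, reconstructs the original Feichtinger--Kozek proof: the forward direction is read off from the definition of a BGTr isomorphism; for the converse, $U^*U = \mathrm{id}$ and $UU^* = \mathrm{id}$ on the $\Ltsp$-layers make $V = U|_{\SOsp(G_1)}$ and $W = U^*|_{\SOsp(G_2)}$ mutually inverse, so the inner layer is an isomorphism, and dualizing $W$ yields the automatically $\wwst$-continuous outer-layer map with inverse $V'$, whose consistency with the middle layer is exactly unitarity, with uniqueness from the $\wstd$density of $\SOsp$ in $\SOPsp$. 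The one point you rightly flag but leave open is the pairing convention: this paper embeds via the \emph{bilinear} pairing $\sigma_h(f) = \int h(t)f(t)\,dt$, under which the Hilbert-layer identity $\langle h, U^*f\rangle = \langle Uh, f\rangle$ picks up conjugations, so the map to dualize is $f \mapsto \overline{W\overline{f}}$ rather than $W$ itself; since $\SOsp$ is invariant under conjugation and $W$ is bounded, this adjustment costs nothing, and your argument goes through verbatim after that substitution.
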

As a corollary one has Corollary 7.3.4 there:
\begin{corollary} \label{isomBGTR3}
An isomorphism $V: \SOsp(G_1) \to \SOsp(G_2)$ extends to a unitary
BGT-isomorphism between $\SOGTr(G_1)$ and $\SOGTr(G_2)$ if and only if
\begin{equation}  \label{prepViso}
 \langle f_1, f_2 \rangle_{\Ltsp(G_1)} =  \langle V(f_1), V(f_2) \rangle_{\Ltsp(G_2)}, \quad f_1,f_2 \in \SOsp(G_1).
\end{equation}
\end{corollary}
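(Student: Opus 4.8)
The plan is to obtain this corollary as a direct consequence of the Theorem stated just above (Theorem~7.3.3 of \cite{feko98}). The strategy is to manufacture, from the hypothesis (\ref{prepViso}), a single unitary map $U \colon \Ltsp(G_1) \to \Ltsp(G_2)$ on the Hilbert-space layer whose restriction to $\SOsp(G_1)$ equals $V$, and then to check that $U$ and $U^*$ satisfy exactly the boundedness requirement demanded by that Theorem. Once this is done, the Theorem delivers the desired extension to a (unitary) BGT-isomorphism, and conversely the inner-product identity is forced by unitarity.

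For the easy direction ($\Rightarrow$), suppose $V$ already extends to a unitary BGT-isomorphism of $\SOGTr(G_1)$ onto $\SOGTr(G_2)$. Its middle layer is then a unitary $U \colon \Ltsp(G_1) \to \Ltsp(G_2)$ with $U|_{\SOsp(G_1)} = V$. Since a unitary preserves the scalar product, for all $f_1, f_2 \in \SOsp(G_1)$ one has $\langle f_1, f_2\rangle_{\Ltsp(G_1)} = \langle U f_1, U f_2\rangle_{\Ltsp(G_2)} = \langle V f_1, V f_2\rangle_{\Ltsp(G_2)}$, which is precisely (\ref{prepViso}).

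For the substantive direction ($\Leftarrow$), I would first specialize (\ref{prepViso}) to $f_1 = f_2 = f$ to get $\ltnorm{f} = \ltnorm{Vf}$ for every $f \in \SOsp(G_1)$; thus $V$ is an $\Ltsp$-isometry on the dense subspace $\SOsp(G_1) \hkr \Ltsp(G_1)$ (the dense embedding being the group analogue of Proposition~\ref{SOblock12}). By density and completeness of $\Ltsp$, $V$ extends uniquely to a linear isometry $U \colon \Ltsp(G_1) \to \Ltsp(G_2)$. Its range is closed, being the isometric image of a complete space, and contains $V\bigl(\SOsp(G_1)\bigr) = \SOsp(G_2)$, which is dense in $\Ltsp(G_2)$; hence $U$ is onto and therefore unitary, with $U^* = U^{-1}$. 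It remains to verify the hypotheses of the Theorem for this $U$. By construction $U|_{\SOsp(G_1)} = V$ is bounded from $\SOsp(G_1)$ into $\SOsp(G_2)$, since $V$ is an isomorphism. For the adjoint, observe that $U$ extends $V$ and $V$ maps $\SOsp(G_1)$ bijectively onto $\SOsp(G_2)$, so $U^{-1}$ extends $V^{-1}$, whence $U^*|_{\SOsp(G_2)} = U^{-1}|_{\SOsp(G_2)} = V^{-1}$ is bounded into $\SOsp(G_1)$ because $V$ is a topological isomorphism. With both $U$ and $U^*$ restricting to bounded maps between the Feichtinger algebras, the Theorem applies and yields an extension of $U$ to an isomorphism of the Banach Gelfand triples whose inner layer is exactly $V$; as $U$ is unitary, this is the asserted unitary BGT-isomorphism.

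The part I expect to require the most care is the consistency bookkeeping in the ($\Leftarrow$) direction: one must confirm that the Hilbert-space adjoint $U^*$ genuinely restricts to $V^{-1}$ on $\SOsp(G_2)$ (and not merely to some other bounded operator), and that the \emph{same} unitary $U$ simultaneously witnesses both boundedness hypotheses of the Theorem. Everything else is routine density and completeness argument. The real analytic content — that $\SOsp$-boundedness of a unitary together with its adjoint forces a clean three-layer extension — is already encapsulated in the Theorem, so once the compatibility of the extensions is pinned down, the corollary follows immediately.
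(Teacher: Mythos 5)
Your proposal is correct and follows exactly the route the paper intends: the paper states this as Corollary~7.3.4 of \cite{feko98}, derived from Theorem~7.3.3, and your argument (polarization/isometry on the dense subspace $\SOsp(G_1) \hkr \Ltsp(G_1)$, unique extension to a unitary $U$, verification that $U|_{\SOsp(G_1)} = V$ and $U^*|_{\SOsp(G_2)} = U^{-1}|_{\SOsp(G_2)} = V^{-1}$ are bounded, then invoking the Theorem) is precisely that standard deduction, with the consistency bookkeeping for $U^*$ correctly carried out. Nothing further is needed.
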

This general principle can be used in order to show that
the usual definition of the Fourier transform of mild distributions,
given by
\begin{equation}\label{FTmild03}
   \hatsi(f) := \sigma(\hatf), \quad \fSORd,
\end{equation}
defines in fact a valid extension of the ordinary Fourier transform
leaving $\SORdN$ invariant. In fact, due to its $\wwst$-continuity
(i.e. the preservation of bounded, $\wstd$convergent sequences) it
is uniquely determined. In fact, the validity of (\ref{prepViso})
is in this case just the verification of the {\it Fundamental
Relationship for the Fourier Transform} (as Hans Reiter has called it),
namely the validity of
\begin{equation}\label{FundFTequ03}
   \intRd  \hatg(t) f(t) dt = \intRd  g(s) \hatf(s) ds, \quad f,g \in \SORd.
\end{equation}

For those who prefer to avoid functional analytic arguments one could
take the elementary approach, which requires some home-work, but not
really any difficult computations. It is just enough (referring
to the sequential approach to mild distributions) that the
ordinary Fourier transform (combined with the validity of the
Fourier Inversion theorem) that the (naive) Fourier transform
defined on $\SORdN$ (via Riemann integrals) maps mild Cauchy sequences
into mild Cauchy sequences, and preserves equivalence classes of
mild Cauchy sequence, the so-called ECMIs. Hence it can be extended
to an isometric mapping between ECMIs in a way which prolongates
the ordinary Fourier transform.

Similar arguments apply to other operations, and in particular
to time-frequency shifts. In fact, in the realm of tempered distributions
the space of mild distributions is the largest space, as expressed by
the following formulation:
\begin{lemma} \label{SOPmax03}
Let $\BspN$ be a Banach space of tempered distributions, continuously
embedded into $\ScPRd$, i.e. satisfying the condition that
\begin{equation} \label{BconvScP}
\lim_{n \to \infty} \Bnorm{h - h_0} = 0  \quad \Rightarrow \quad
 \lim_{n \to \infty}(h - h_0)(f) = 0, \forall f \in \ScRd.
 \end{equation}
Than the condition $$\Bnorm{\pilam(h)} = \Bnorm{M_s T_t(h)} = \Bnorm{h}, \quad (t,s) \in \TFd \,\, \mbox{for all} \,\,\, h \in \Bsp $$
implies the continuous embedding $\BspN \hkr \SOPRdN$.
\end{lemma}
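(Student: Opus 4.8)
The plan is to show that every $h \in \Bsp$ is a mild distribution with a norm bound $\SOPnorm{h} \le C\,\Bnorm{h}$ for one fixed constant $C$, which is exactly the asserted embedding. I would use the characterization recalled above, that a tempered distribution is mild precisely when its STFT is bounded, with $\SOPnorm{h}$ equivalent to $\sup_{\lambda}|V_g h(\lambda)|$ for a fixed nonzero window $g$; it then suffices to bound $|V_g h(\lambda)|$ uniformly in $\lambda \in \TFd$ by a multiple of $\Bnorm{h}$. I would fix the Gaussian $g = g_0 \in \ScRd$. Since $g$ is Schwartz and $h \in \ScPRd$ by hypothesis, $V_g h(\lambda) = \langle h, \pilam g\rangle$ is well defined for every $\lambda = (t,s)$. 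Moreover \eqref{BconvScP} says exactly that, for each fixed $f \in \ScRd$, the linear functional $k \mapsto k(f)$ on $\Bsp$ is sequentially continuous at $0$, hence bounded; applied to $f = g$ this yields a constant $C_g > 0$ with $|k(g)| \le C_g \Bnorm{k}$ for all $k \in \Bsp$.

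The \emph{decisive step} is to move the time-frequency shift off the window and onto the distribution. Transposing $T_t$ and $M_s$ in the pairing gives $\langle h, M_s T_t g\rangle = \langle T_{-t}M_s\,h,\,g\rangle$, and $T_{-t}M_s\,h$ is a time-frequency shift of $h$ up to a unimodular phase factor (from the commutation of $T$ and $M$). Since a norm is insensitive to unimodular scalars and the time-frequency shifts act isometrically on $\Bsp$ by hypothesis, $\Bnorm{T_{-t}M_s\,h} = \Bnorm{h}$. Combining this with the functional bound of the previous paragraph gives
\begin{equation*}
 |V_g h(\lambda)| = |\langle T_{-t}M_s\,h,\,g\rangle| \le C_g \Bnorm{T_{-t}M_s\,h} = C_g \Bnorm{h},
\end{equation*}
uniformly over $\lambda \in \TFd$. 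Thus $V_g h$ is bounded with $\linfnorm{V_g h} \le C_g \Bnorm{h}$, whence $h \in \SOPRd$ with $\SOPnorm{h} \le C\,\Bnorm{h}$; this is the continuous embedding $\BspN \hkr \SOPRdN$.

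The one point needing care, and the main obstacle, is that $h$ is a priori only a tempered distribution, so one must ensure the bounded-STFT criterion genuinely certifies that $h$ extends to a bounded functional on all of $\SORd$. If one prefers not to invoke the $\Msp^{\infty,\infty}$-characterization of $\SOPRd$, the same uniform estimate finishes the argument directly through the continuous STFT inversion: for $f \in \SORd$ one has $f = \langle g,g\rangle^{-1}\intRtd V_g f(\lambda)\,\pilam g\,d\lambda$ (valid unconditionally for a single nonzero $g$), and since each $\pilam g$ is Schwartz one may set $h(f) := \langle g,g\rangle^{-1}\intRtd V_g f(\lambda)\,h(\pilam g)\,d\lambda$. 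This integral converges absolutely because $|h(\pilam g)| \le C_g \Bnorm{h}$ and $\intRtd |V_g f(\lambda)|\,d\lambda = \SOnorm{f}$, giving at once $|h(f)| \le C\,\Bnorm{h}\,\SOnorm{f}$ together with agreement with $h$ on the dense subspace $\ScRd$. The only genuinely fiddly bookkeeping, namely the phase factors and the conjugation convention in the transfer step, never affects moduli and so is irrelevant to every estimate above.
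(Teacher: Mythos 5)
Your proof is correct and follows essentially the same route as the paper: use \eqref{BconvScP} to get the bound $|h(g)| \leq C_g \Bnorm{h}$ for a fixed Schwartz window (the paper invokes the Closed Graph Theorem here, you use sequential continuity of the linear functional, which amounts to the same thing), then transfer the time-frequency shift onto $h$ via the adjoint and exploit the isometric invariance to bound the STFT uniformly, concluding via the bounded-STFT characterization of $\SOPRd$. Your minor sign slip in the transposition ($M_s$ versus $M_{-s}$) is harmless, as you note, since only moduli enter the estimate and the invariance hypothesis covers all $(t,s) \in \TFd$; your extra closing paragraph via STFT inversion is a welcome addition that the paper leaves implicit.
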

\begin{proof}
It is an immediate consequence of (\ref{BconvScP}) that one can apply
the Closed Graph Theorem in order to obtain the following consequence
of the assumptions:  Every $g \in \ScRd $ defines a bounded linear
functional on $\BspN$ via the following assignment (viewing $h \in \Bsp
\subset \ScPRd$ as a tempered distribution) 
$ h \mapsto \sigma_h(\overline{g}) = h(\overline{g})$.
Fixing some non-zero $g \in \ScRd \subset \SORd$  there exists some constant
$C = C(g,\Bsp) > 0$ such that
$$ |h(g)| \leq C_g \Bnorm{h}, \quad h \in \Bsp. $$
Using the invariance property of $\BspN$ one concludes that
\begin{equation}\label{Binvar03}
  |V_{\overline{g}}h(\lambda) = |h(\pilam\overline{g})|  = | (\pilam^* h)(g)|
\end{equation}
which implies then for every $\lambda \in \TFd$ and $h \in \Bsp$:
\begin{equation}\label{Binvar03b}
  |V_{\overline{g}}h(\lambda)  \leq C_g \Bnorm{\pilam^*h} = C_g \Bnorm{h}.
\end{equation}
Thus $h$ has a bounded STFT with for the window $g \in \ScRd$, or $h \in \SOPRd$.
\end{proof}


\section{Convolution and Multiplication}

One cannot expect that convolution or multiplication extends to the space
of all mild distribution. How should one convolve the function constant $1$,
we write ${\bf 1}$ by itself? In a similar way there is no natural interpretation of the pointwise square of the Dirac Delta measure\footnote{The much more involved Colombeau Calculus discusses highly technical ways
of giving it a meaning and also demonstrates that it can be used for very
particular applications in physics, see \cite{co92-3}. But such a theory
is far beyond the scope of this approach, mostly to the limited range of
applications and the high degree of technicalities required to even formulate it. This is in spirit almost the opposite of the ideas which the present article is trying to promote: Give a simplified approach with a wide, but
limited range of applications.}.

On the other hand, it is quite natural to define the convolution of a
functional (some $\sigSOP$) with a test function $\fSORd$. In fact, there
are different methods to define the convolution. One could argue, that
$\SORd$ is a {\it homogeneous Banach space} (see \cite{fe22})  and thus one has
$\LiRd \ast \SORd \subset \SORd$ (always together with corresponding
norm estimates, namely $\SOnorm{g \ast f} \leq \linorm{g} \SOnorm{f}$),
but also, at least for continuous functions in $\Lisp \cap \COsp(\Rdst)$
it can be defined in the pointwise sense. All these different notations
are in fact compatible (this requires some proofs, but as a matter of
fact, and for the user who just wants to employ the expected, natural
identities it is enough to know it), and thus the pointwise realization
of the convolution is one particularly useful option:
\begin{definition} \label{SOPSOconv3}
Given $\sigSOP$ and $\fSO$ the {\it pointwise convolution} is
given by (writing $\fchk(x) = f(-x)$):
\begin{equation}\label{ptwsigastf3}
  \sigma \ast f(x) = \sigma(T_x \fchk), \quad x \in \Rdst.
\end{equation}
\end{definition}
The fact that $\SORd$ is isometrically translation invariant with
continuous shift (i.e. is a homogeneous Banach space) implies that
one has
\begin{equation} \label{SOPSOconv3}
 \SOPRd \ast \SORd \subset \CbRd,
\end{equation}
in fact, $\sigma \ast f$ is a uniformly continuous function on $\Rdst$,
since one has for $\fSO$
$$  |\sigma \ast f(x) - \sigma \ast f(x+h)| \leq \SOPnorm{\sigma}
 \SOnorm{T_h f -f} \to 0, \quad \mbox{for} \,\, h \to 0. $$
This fact can also be used in order to prove that $\CbRd$ is
$\wstd$dense in $\SOPRd$ (by letting $f$ in $\SORd$ run through
a Dirac family, e.g. by compressing a non-negative function
$f_0$ with $\hatf(0) =1$).

This pointwise definition can be also extended to a convolution
by bounded measures,  hence also by functions $g \in \LiRd$,
with
\begin{equation}\label{LiSOP03}
   \SOPnorm{g \ast \sigma} \leq \linorm{g} \SOPnorm{\sigma} \, ,
   \quad  g \in \LiRd, \sigma \in \SOPRd.
\end{equation}
However, the action of $\LiRd$ by convolution is not only
norm continuous on $\SOPRdN$, but it is also $\wwst$continuous,
and thus a BGTr homomorphism in the sense of \cite{cofelu08}.

Let us just formulate some basic statements concerning the preservation
of $\wstd$convergence under basic operations.
\begin{lemma} \label{convwst03}
Assume that $\siginf$ is a bounded sequence of mild distributions with
mild limit $\sigma_0$. Then one has:
\begin{enumerate}
  \item For any bounded measure $\mu \in \MbRd$ one has
   $$ \wstlim_{n \geq 1} \,\, \mu \ast \sigma_n = \mu \ast \sigma_0. $$
  \item For any function $h \in \FLiRd$ one has:
   $$ \wstlim_{n \geq 1}\,\, h \cdot \sigma_n = h \cdot \sigma_0. $$
\end{enumerate}
\end{lemma}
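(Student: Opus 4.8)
The plan is to reduce both assertions to the very definition of mild convergence -- namely that $\sigma_n(g) \to \sigma_0(g)$ for every fixed $g \in \SORd$ -- by transporting the operation in question from the distribution side to the test-function side. The two structural facts I would invoke are both recorded earlier in the text: first, that $\SORd$ is a pointwise Banach ideal in the Fourier algebra $\FLiRdN$, so that $h \cdot f \in \SORd$ with $\SOnorm{h \cdot f} \leq \normta{h}{\FLisp} \SOnorm{f}$ whenever $h \in \FLiRd$ and $\fSO$; and second, that $\SORd$ is a homogeneous Banach space, which (via the vector-valued integral $\mu \ast f = \intRd T_y f \, d\mu(y)$) yields $\MbRd \ast \SORd \subset \SORd$ with $\SOnorm{\mu \ast f} \leq \normta{\mu}{\Mbsp} \SOnorm{f}$.

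For statement~(2) there is nothing to transport: multiplication of a mild distribution by $h$ is \emph{defined} by $(h \cdot \sigma)(f) = \sigma(h \cdot f)$. Since $h \cdot f \in \SORd$ by the ideal property, I would simply evaluate
\begin{equation*}
  (h \cdot \sigma_n)(f) = \sigma_n(h \cdot f) \longrightarrow \sigma_0(h \cdot f) = (h \cdot \sigma_0)(f), \quad \fSORd,
\end{equation*}
where the limit is the mild convergence of $\siginf$ applied at the single fixed test function $h \cdot f \in \SORd$. This is exactly the claimed mild convergence $h \cdot \sigma_n \to h \cdot \sigma_0$.

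For statement~(1) I would first record the defining relation for convolution by a bounded measure, which is the transpose of the bounded operator $f \mapsto \mu\chck \ast f$ on $\SORd$ (here $\mu\chck$ is the reflected measure, $\mu\chck(B) = \mu(-B)$): for $\fSO$ one has $(\mu \ast \sigma)(f) = \sigma(\mu\chck \ast f)$. This is consistent with the pointwise convolution $\sigma \ast f(x) = \sigma(T_x \fchk)$ and its extension to $\MbRd$ noted above. Because $\mu\chck \ast f \in \SORd$ by the homogeneous-Banach-space property, the same one-point argument applies:
\begin{equation*}
  (\mu \ast \sigma_n)(f) = \sigma_n(\mu\chck \ast f) \longrightarrow \sigma_0(\mu\chck \ast f) = (\mu \ast \sigma_0)(f).
\end{equation*}
The norm estimate $\SOPnorm{\mu \ast \sigma_n} \leq \normta{\mu}{\Mbsp} \SOPnorm{\sigma_n}$ shows in addition that $(\mu \ast \sigma_n)_{n \geq 1}$ is again a bounded sequence, so that ``mild limit'' is the appropriate notion within the bounded-sequence framework of this paper.

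The computations above are routine once the two identities are in place; accordingly the only genuine work lies in those identities, both of which I would take from earlier in the text rather than re-prove here. The point requiring the most care is the consistency of the transpose formula $(\mu \ast \sigma)(f) = \sigma(\mu\chck \ast f)$ with the pointwise convolution and the mapping property $\MbRd \ast \SORd \subset \SORd$ for the homogeneous Banach space $\SORd$; the multiplicative analogue $\FLiRd \cdot \SORd \subset \SORd$ is the cleaner of the two and presents no difficulty. Note finally that the boundedness hypothesis on $\siginf$ is used only to guarantee that the transformed sequences remain bounded, not in the convergence step itself, which rests solely on evaluating mild convergence at one fixed element of $\SORd$.
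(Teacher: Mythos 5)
Your proposal is correct and follows essentially the same route the paper itself indicates: Remark~\ref{convwstrm03} declines a formal proof but sketches exactly your transposition identity (there phrased for mild Cauchy sequences, with $\mu\chck(k) = \mu(k\chck)$), and your reduction of both parts to evaluating mild convergence at the single transported test function $h \cdot f$ resp.\ $\mu\chck \ast f$ -- justified by the pointwise ideal property $\FLiRd \cdot \SORd \subset \SORd$ and the homogeneous Banach space/module property $\MbRd \ast \SORd \subset \SORd$ -- is the intended argument. Your closing observation that boundedness of $\siginf$ serves only to keep the transformed sequences within the bounded-sequence framework (via $\SOPnorm{\mu \ast \sigma_n} \leq \normta{\mu}{\Mbsp}\SOPnorm{\sigma_n}$), and is not needed for the pointwise convergence step, is a correct and worthwhile refinement of what the paper leaves implicit.
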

\begin{remark} \label{convwstrm03}
We do not give formal proof, but mention instead: For any mild
Cauchy-sequence $(f_n)_{n\geq 1}$ in $\SORd$ it is clear that $\mu \ast f_n$
is also a mild Cauchy sequence (and furthermore equivalence is preserved),
since one has
$$   \mu \ast f_n (h) = f_n (\mu\chck \ast f_n) $$
with $\mu \chck (k) = \mu(k \chck)$ for $k \in \CcRd$.

The $\wstd$convergence is also valid on the side of measures. If
$\mu_k$ is a bounded and tight (uniformly concentrated) sequence
of bounded measures which is $\wst$-convergent to $\mu_0$
then $\mu \ast \sigma$ is the $\wstd$limit of $\mu_k \ast \sigma$.
\end{remark}

\subsection{Sampling goes to Periodization}

It is a well-known fact that sampling of a function $f$ corresponds
to periodization of its Fourier transform $\hatf$.  This can be
seen as a consequence of the (discrete version) of the Poisson
formula in the finite, discrete case. In fact, it can be derived
using simple exponential sums involving unit roots of finite order
in the complex plane and the fact that they add up to zero (for $n \geq 1$).

In the setting of mild distributions an elegant proof the Shannon Sampling
Theorem for band-limited functions can be derived via mild distributions
based on the following reasoning. First we note that
$$ f \mapsto  \SHZd(f) := \sumkZd \delta_k(f) =  \sumkZd f(k)$$
(an absolutely convergent sum in $\Cst$)
is a well defined bounded linear functional on $\SORd$, i.e. a
mild distribution.

Then one observes that the {\it Poisson Summation Formula}
is not only valid for $f \in \ScRd$, but for the much larger space $\SORd$:
$$     \sumkZd \delta_k(f) =  \sumkZd f(k) = \sumnZd \hatf(n) = \sumkZd \delta_k(\hatf), \quad \fSORd. $$
This fact can be described as $ \FT(\SHZd) = \SHZd$, with
$ \SHZd = \sumkZd \delta_k$.
Since convolution products
are mapped into pointwise convolution and vice versa  we  obtain
\begin{equation} \label{pertosamp03}
\FT(\Shah \ast f)  = \FT(\Shah) \cdot \FT(f) = \Shah \cdot \hatf,
\quad \fSORd, \end{equation}
and combined with the well-known formula
$ \Shah \cdot h = \sumkZd h(k) \delta_k, \quad  k \in \Zdst $
for $h \in \SORd$  and the observation that
$ \Shah \ast f = \sumkZd \delta_k \ast f = \sumkZd T_k f$,
tells us that (\ref{pertosamp03}) tells us that periodization
of $\fSO$ corresponds to sampling on the Fourier transform side.
By applying the inverse Fourier transform this shows that
sampling on the time-side corresponds to periodization on the
Fourier transform side.

This can easily be used to recover $\hatf$ in the band-limited
case (i.e. if $\supp(\hatf)$ is compact) from its periodized
version as long as the periodization is coarse enough. This
is equivalent to recovery of a band-limited function $\fSO$
from sufficiently fine samples.

Let us illustrate this shortly for the one-dimensional
case:   coarse periodization on the
Fourier side corresponds to convolution with a dilated
Dirac comb of the form $ \Shah_\beta := \sumkZd \delta_{\beta k}$,
for sufficiently large $\beta$. In fact $\wstlim_{\beta \to \infty} = \delta_0$ and consequently $ \Shah_\beta \ast f \to \delta_0 \ast f = f$ in the
$\wstd$sense.

Applying the inverse Fourier transform the corresponding statement read
as follows: Since  $$\IFT(\Shah_\beta) = \alpha \,\, \Shah_\alpha
\quad \mbox{for}  \quad  \alpha = 1/\beta $$
one can expect that for any $g \in \SORd$ one has : 
$$  
 g \cdot (\alpha \, \Shah_\alpha)
  = \alpha \,\sumkZd g(\alpha k) \delta_{\alpha k} \to
 g \,\, \mbox{for}\,\, \alpha \to 0.$$
This fact can also be derived directly, by applying the mild distribution
$ g \cdot (\alpha \, \Shah_a)$ on a test function $f \in \SORd$, and observing
that we  simply have to consider the convergence of Riemannian sums for $ g\cdot f \in \SORd$ (which is absolutely Riemann integrable):
$$  [g \cdot (\alpha \, \Shah_a)] (f) = \alpha \sumkZd g(\alpha k) f(\alpha k)
\to  \intRd g(t) f(t) dt = g(f), \,\, \mbox{as} \,\, \alpha \to 0.$$

This estimate  could also be described by  the rule
$ [g \cdot (\sigma)] (f) = \sigma(g \cdot f)$,  valid for
any $\sigSOP$, and $f,g \in \SORd$,   combined with the
statement that $$ \wstlim_{\alpha \to 0} \alpha \, \Shah_a = {\bf 1} = \IFT(\delta_0), $$
and finally with the fact that pointwise multiplication by $g$ preserves
$\wstd$convergence.
%
The reader will find more details on this kind of reasoning in
\cite{fe24-1}.

\section{There is just {\it one}  Fourier Transform}

The setting of mild distributions also allows to make the
connection between discrete and continuous, periodic and non-periodic
signals and the corresponding formulas for Fourier transforms more
transparent. In fact, one can derive the validity (including the
concrete description) of one from any of the others. Typically
this is presented as a vague, heuristic argument, but in the
given setting this is obtained by taking \tred{``mild limits''}.
This is very much in the spirit of Jens Fischer's work, see
e.g. \cite{fi18}, or the pre-runner \cite{fe17-1}\footnote{While
the theory of tempered distributions appears to be more general, and
certainly is, with respect to questions of differentiability, the
larger reservoir of tempered distributions also has some negative
effects. Above all one cannot draw the conclusion that a
tempered distribution supported on a discrete set is a sum
of Dirac Deltas. It can well be a sum of partial derivatives, for
example. This makes some considerations more complicated or even
prohibit certain conclusions that are possible in the context of
mild distributions.}.

Since mild convergence is best understood by looking at the
Time-Frequency realizations of the corresponding signals
let us illustrate the situation in this way.

Recall that for any $f \in \SORd$ the periodized version
is of the form $ \Shah_\beta \ast f \in \CbRdN$, and hence
the sampled and periodized version (equal to the periodized
and sampled version of $f$, if $\beta = N \alpha$, so e.g.
if $\beta \in \Nst$ and $\alpha = 1/\beta)$  is
\begin{equation} \label{persmpcomm03}
(\Shah_\beta \ast f) \cdot \Shah_\alpha =
  (\Shah_\alpha \cdot f) \ast \Shah_\beta
  \end{equation}
belongs to $\SOPRd$ (as a weighted Dirac comb with bounded
coefficients) and thus has a Fourier transform in the sense
of $\SOPRd$. Following the algebraic rules of the (generalized)
Fourier transform it is not difficult to verify that one
can write both this periodic and discrete signal as a weighted
sum of finitely many shifted Dirac combs of the form
$\Shah_\beta$, and the same is true for its Fourier transform
(also now periodic with period $1/\alpha = \beta$!) and the
transformation rule (which is unitary at the level of
these coefficients in $\Cst^N$, for $N = \beta/\alpha = \beta^2$)
is just the standard DFT (Discrete Fourier Transform, typically
realized with the help of the FFT, the Fast Fourier Transform).

We thus can establish an analogy between the recovery of a
band-limited function of two variables, whose Fourier version
might look like the STFT of our signal. Recovery from the
periodized version is then possible if the periodization is
coarse enough. It can be performed by some kind of filtering,
i.e.\ multiplication on the $2$-dimensional Fourier domain.
In the current situation one can imagine well (and corresponding
theoretical investigations are on the way) that it is enough
to recover the signal (at least with good approximation) by
reconstructing the signal using the Gabor coefficients involving
the complex coefficients corresponding to the central domain.
Alternatively one may think (this is the method in the background
of \cite{feka07}) of first smoothing the discrete and periodic
function and then localizing it (or vice versa) by multiplying
it with a local plateau function. Using (again) the function
space $\SORdN$ this can in fact be achieved with arbitrary
high precision, by letting the period $\beta$ tend to infinity (and
accordingly the sampling rate $\alpha$ tends to zero).
\begin{figure}  [ht]
\label{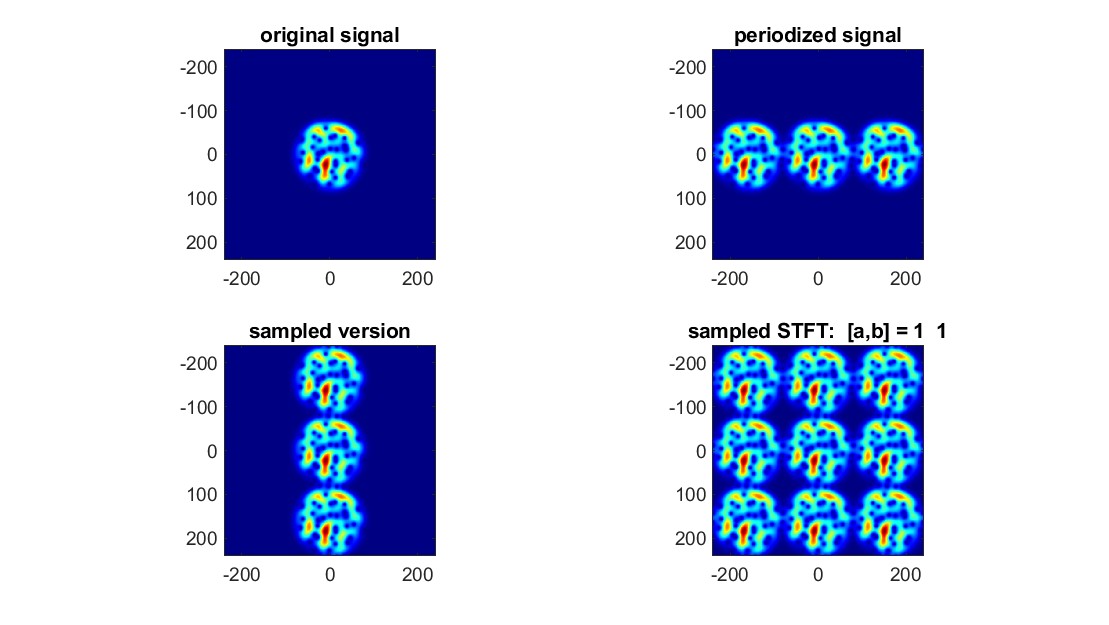}
\begin{center}
\includegraphics[scale=0.36]{demsmp03B2.jpg}
\caption{Demonstration of the effect of periodization
and sampling in the STFT picture:  Left upper corner: original STFT of a well concentrated function; right upper corner: The Spectrogram of the a
coarsely periodized version of the same signal; left lower corner: the same
for a sampled version, i.e.\ with periodization in the frequency direction;
right lower corner: the STFT of a sampled {\it and} periodized version of the
original signal. Near the origin of the TF-plane it is exactly the same as
the STFT of the original signal.}
\end{center}
\end{figure}

Similar arguments can be made in order to describe the usual heuristic
arguments allowing the transition from periodic to non-periodic signal,
for example. Given a well-concentrated function $f \in \SORd$ (thus
with $\hatf \in \SORd$) we could look at the periodic versions of this
functions, i.e. at the mild distributions of the form
$ \fper = \Shah_\beta \ast f$. By the convolution theorem  its  Fourier transform can be computed as
\begin{equation} \label{persmpalbe3}
\FT(\fper) = \FT(\Shah_\beta) \cdot \fhat = \alpha (\Shah_\alpha \cdot \hatf) = \sumkZd  \alpha \, \hatf(\alpha k) \delta_{\alpha k}.
\end{equation}
The classical theory of Fourier Series would start from this fact, and
use the (orthogonal) expansion of $\beta-$periodic functions in terms
for the pure frequencies $\chi_{\alpha k}(t)  = exp(2 \pi i \alpha k t)$
as a starting point, and would obtain the coefficients for the expansion
for the (in fact absolutely convergent) Fourier series representation
$$  \fper(t) = \sumkZd  c_k \chi_{\alpha k}(t) $$
in the classical way, by integrating $\fper(t)$ against the
pure frequencies $\chi_{\alpha k}$ over the fundamental domain of the
periodic function $\fper \cdot \chi_{\alpha k}$. But as a matter
of fact (using the periodicity) this is just the same as the
sampling value, in other words one has  $c_k = \hatf(\alpha k)$!

Thus the heuristic transition from periodic Fourier series to
the continuous Fourier transform (first now for functions in $\SORd$,
and then extended to more general function classes) can be made
precise in the sense of mild convergence. Clearly we have
$$  \wstlim_{\beta \to 0} (\Shah_\beta \ast f) =
        [\wstlim_{\beta \to 0} (\Shah_\beta)] \ast f = \delta_0 \ast f = f
$$
or on the Fourier transform side we have
$$\hatf = \wstlim_{\alpha \to 0} \, \alpha \,\,(\Shah_\alpha \cdot \hatf).$$
Note that the correct normalization of the samples on the Fourier transform
side is by the scalar $\alpha$ as it appears in the use of Riemannian sums,
because otherwise things would tend to infinity.



\section{Generalized Stochastic Processes}

The setting described is not limited to so-called {\it deterministic
signals}, but can also be extended to the {\it stochastic setting}.
While the classical theory of {\it wide-sense stationary processes}
describes the setting by means of vector-valued measures (in analogy
with classical Fourier theory, making use of the asymmetric viewpoint
that the Fourier or inverse Fourier transform requires integrability),
the approach via mild distributions provided in the PhD thesis of
W.~H\"ormann   (\cite{ho89} and presented in   \cite{feho14})
starts by considering a {\it generalized stochastic process} (GSP)
as an abstract operator from $\rho$ from $\SORd$ to some Hilbert space $\Hilb$,
 defined via probability theory. 

In this setting one has a natural notation of a
{\it spectral process} (namely the Fourier transform of the given
process, provided via $\widehat{\rho}(f) = \rho(\hatf), \,\, f \in \SOsp$,
as expected.
The {\it spectral representation} of a stochastic process is then
nothing else than a different view on the Fourier inversion theorem.
Any such GSP $\rho$ has an {\it autocorrelation} $\sigma_\rho \in \SOPRtd$
and the autocorrelation of the spectral process $\widehat{\rho}$ is just
the two-dimensional (generalized) Fourier transform of the autocorrelation
of the original process, in short
$\sigma_{\widehat{\rho}} = \widehat{\sigma_\rho}$.
 Wide-sense stationarity corresponds to the fact
that $\sigma_\rho$ is invariant with respect to the diagonal subgroup
 $ \Delta_d:=  \{(x,x) \suth x \in \Rdst \}$ and thus equivalently to
the concentration of $\sigma_{\widehat{\rho}}$ on the orthogonal
complement $\Delta^{\perp}_d \equiv \Rdst$. In fact, in this case
it can be even identified with a translation-bounded measure.

 We do not go into further details here, but point to the fact that
 also in this setting we have a simplification of the Fourier analytic
 tasks. Furthermore the functorial properties of $\SOsp$ and its dual
 space allow to go through the same line of arguments even in the
 more general setting of LCA groups\footnote{Let us just mention
 that the word ``mild distributions'' does not appear on the referred
 papers.}

\newpage 
\section{Conclusion and Summary}

This note tries to convey the message that a new approach to Fourier
analysis is necessary in order to provide a mathematically correct, yet
simple interpretation of the key facts required for a good understanding
of the mathematics behind signal processing or system theory. Also
it should be seen as a contributions against the mystification of the
role of the Dirac distribution (a harmless bounded measure or mild
distribution).

It seems to be unavoidable to make use of basic functional analytic
tools once continuous variable come into play, simply because them
methods from linear algebra are not sufficient anymore, once the
signal spaces are too big to have a finite dimensional basis. Thus
{\it Linear Functional Analysis} has to be invoked in order to
allow infinite sequences or series, to describe the notation of
infinite series representations and so on. This is also why
Banach spaces are so important, because such spaces are complete,
very much like the real or complex numbers. Every Cauchy sequence
is convergent, any absolutely convergent series is (unconditionally)
convergent, and so on. However, in some cases even this nice concept
of convergence is not appropriate. As demonstrated in this note
the notation of ``mild convergence'' (i.e.\ the application of the
abstract principle of $\wstd$convergence in the space of mild
distributions $\SOPRd$, which is the dual space of Feichtinger's
algebra $\SORdN$) has an important role, but also a very natural
interpretation in the spirit of Time-Frequency Analysis.

Overall, the text tries to bring together a couple of ideas and
viewpoints which should help to grasp these notions and make them
more accessible to the applied scientists, be it engineers
who may design signal processing algorithms, or physicists who
tend to make use of the Dirac Delta notation often in a vague
way. The space of mild distributions should be thought as the
space of signals, and each such signals has a (generalized)
Fourier transform, which is also a mild distribution. Simple
rules apply, including the convolution theorem.

The author of this note is continuing to work on alternative
methods to simplify the approach to mild distributions or to
motivate its use, which finally should end up in new ways
of teaching Fourier Analysis to Applied Scientists but also
application oriented mathematicians. Of course he is open
to questions, constructive or even critical comments. Possible
unclear formulations are solely in his responsibility.

\newpage

\bibliographystyle{abbrv}  

\end{document}